\newcommand{\I}{\mathcal I}
\newcommand{\C}{\mathbb C}
\newcommand{\Tr}{\mathrm{Tr}}
\newcommand{\F}{\mathbb{F}}
\newcommand{\fqn}{\mathbb{F}_{q^n}}
\newcommand{\ord}{\mathrm{ord}}
\newcommand{\circc}{\circ^q}
\newtheorem{theorem}{Theorem}[section]
\newtheorem{conjecture}{Conjecture}
\newtheorem{prop}[theorem]{Proposition}
\newtheorem{define}[theorem]{Definition}
\newtheorem{lemma}[theorem]{Lemma}
\newtheorem{cor}[theorem]{Corollary}
\newtheorem{example}[theorem]{Example}
\newtheorem{remark}[theorem]{Remark}
\author[Lucas Reis]{Lucas Reis}
\address{
Departamento de Matem\'{a}tica\\
Universidade Federal de Minas Gerais\\
UFMG\\
Belo Horizonte, MG\\
 30123-970\\
 Brazil\\
 }
\email{lucasreismat@gmail.com}
\title{Existence results on k-normal elements over finite fields}
\keywords{Normal basis, $k-$normal elements, Primitive elements, Elements of high order}
\date{\today
}
\subjclass[2010]{12E20 (primary), 11T30 and 11T06 (secondary)} 
\begin{document}
\maketitle

\begin{abstract}
An element $\alpha \in \F_{q^n}$ is normal over $\F_q$ if $\alpha$ and its conjugates $\alpha, \alpha^q, \cdots \alpha^{q^{n-1}}$ form a basis of $\F_{q^n}$ over $\F_q$. Recently, Huczynska, Mullen, Panario and Thomson (2013) introduce the concept of $k$-normal elements, generalizing the normal elements. In the past few years, many questions concerning the existence and number of $k$-normal elements with specified properties have been proposed. In this paper, we discuss some of these questions and, in particular, we provide many general results on the existence of $k$-normal elements with additional properties like being primitive or having large multiplicative order. We also discuss the existence and construction of $k$-normal elements in finite fields, providing a connection between $k$-normal elements and the factorization of $x^n-1$ over $\F_q$.
\end{abstract}

\section{Introduction}
Let $\F_{q^n}$ be the finite field with $q^n$ elements, where $q$ is a prime power and $n$ is a positive integer. The field $\F_{q^n}$ has two main algebraic structures: the set of nonzero elements $\F_{q^n}^*$ is a cyclic group and $\fqn$ is an $n$-dimensional vector space over $\F_q$. We have a notion of generators in these algebraic structures: an element $\alpha\in \fqn^*$ is primitive if $\alpha$ generates the cyclic group $\fqn^*$ or, equivalently, $\alpha$ has multiplicative order $q^n-1$. Also, $\alpha\in \fqn$ is normal over $\F_q$ if $\alpha$ and its conjugates $\alpha^{q}, \ldots, \alpha^{q^{n-1}}$ span $\fqn$ as an $\F_q$-vector space: in this case, the set $\{\alpha, \alpha^{q}, \ldots, \alpha^{q^{n-1}}\}$ is a normal basis. Primitive elements are constantly used in cryptographic applications such as discrete logarithm problem and pseudorandom number generators. Normal bases are also object of interest in many applications due to their efficiency in fast arithmetic: for instance, the $q$-th power map $a\mapsto a^q$ is performed by the shift operator in a normal basis. Sometimes it is interesting to combine these properties and therefore obtain elements that are simultaneously primitive and normal. The celebrated Primitive Normal Basis Theorem says that, for any positive integer $n$ and any finite field $\F_q$, there exists an element $\alpha\in \fqn$ that is primitive and normal over $\F_q$: this result was first proved by Lenstra and Schoof \cite{LS87} and a proof without the use of computers was latter given by Cohen and Huczynska \cite{CH03}.

Recently, Huczynska et al. \cite{HMPT} introduce the concept of $k$-normal elements, as an extension of the usual normal elements. In \cite{HMPT}, they show many equivalent definitions for these $k$-normal elements and here we pick the most natural in the sense of vector spaces.

\begin{define}
Given $\alpha\in \F_{q^n}$, let $V_{\alpha}$ be the $\F_q$-vector space generated by $\alpha$ and its conjugates $\alpha^q, \ldots, \alpha^{q^{n-1}}$. The element $\alpha$ is $k$-normal over $\F_q$ if $V_{\alpha}$ has dimension $n-k$, i.e., $V_{\alpha}$ has co-dimension $k$.
\end{define}
From this definition, the usual normal elements correspond to the $0$-normal elements and the element $0\in \fqn$ is the only $n$-normal element in $\fqn$. Of course, the number $k$ is always in the interval $[0, n]$. The concept of $k$-normality depends strongly on the base field that we are working: for instance, if $\F_8=\F_2(\beta)$ with where $\beta^3=\beta+1$, then $\beta$ is normal over $\F_8$ but is $1$-normal over $\F_2$: its conjugates $\beta^2$ and $\beta^4$ over $\F_2$ satisfy $\beta^4=\beta^2+\beta$. Unless otherwise stated, $\alpha\in \fqn$ is $k$-normal if it is $k$-normal over $\F_q$.

Motivated by the Primitive Normal Basis Theorem, in 2014, Mullen and Anderson propose the following problem (see \cite{M16}, Conjecture 3).
\begin{conjecture}\label{conj:mullen}
Suppose that $p\ge 5$ is a prime and $n\ge 3$. Then, for $k=0, 1$ and $a=1, 2$, there exists some $k$-normal element $\alpha\in \F_{p^n}$ with multiplicative order $\ord(\alpha)=\frac{p^n-1}{a}$.
\end{conjecture}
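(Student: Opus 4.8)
The plan is to establish the $k=1$ cases by a direct character-sum argument; the $k=0$, $a=1$ case is exactly the Primitive Normal Basis Theorem of Lenstra--Schoof \cite{LS87}, and the $k=0$, $a=2$ case will follow from the same machinery (applied to the full module $\fqn$, with a quadratic twist).

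First I would recast $1$-normality through the $\F_q[x]$-module structure on $\fqn$ given by $x\cdot\beta=\beta^q$, under which $\fqn\cong\F_q[x]/(x^n-1)$. Writing $\alpha$ as a residue $f(x)\bmod(x^n-1)$, one has $\dim_{\fq}V_\alpha=n-\deg\gcd(f,x^n-1)$, so $\alpha$ is $1$-normal exactly when $\gcd(f,x^n-1)$ is linear; for an existence statement it suffices to produce one such $\alpha$ with $\gcd(f,x^n-1)=x-1$. Those $\alpha$ are precisely the generators, as $\F_q[x]$-modules, of the submodule $H$ annihilated by $(x^n-1)/(x-1)$ --- which is just the trace-zero hyperplane of $\fqn$, since $(x^n-1)/(x-1)=1+x+\cdots+x^{n-1}$ acts as $\Tr_{\fqn/\fq}$ --- and there are exactly $\Phi_q\!\big((x^n-1)/(x-1)\big)$ of them, where $\Phi_q$ is the polynomial analogue of Euler's totient.

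Second, I would build characteristic functions for the multiplicative and the ``relative-normal'' conditions and combine them. For the multiplicative side I use the standard $e$-free indicator: for $e\mid q^n-1$, a weighted sum of multiplicative characters of order dividing $e$ detects the elements generating the subgroup of order $e$; taking $e=q^n-1$ detects primitive elements, and $e=(q^n-1)/2$ together with the factor $\tfrac12(1+\chi_2(\cdot))$, where $\chi_2$ is the quadratic character, detects the elements of order $(q^n-1)/2$. For the additive side I use the normal-type indicator (as in \cite{CH03}) for the module $H\cong\F_q[x]/\big((x^n-1)/(x-1)\big)$: a weighted sum, over monic divisors $h$ of $(x^n-1)/(x-1)$, of $\sum_{\ord\psi=h}\psi(\cdot)$, where $\psi$ runs through the additive characters of $\fqn$ of $\F_q$-order exactly $h$. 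Summing the product of the two indicators over $\alpha\in\fqn^{\ast}$ expands into a double sum whose $(d,h)=(1,1)$ term is the main term $\tfrac{\phi(q^n-1)}{q^n-1}\,\Phi_q\!\big((x^n-1)/(x-1)\big)$ (halved, for $a=2$), and whose remaining terms involve hybrid sums $\sum_{\alpha\in\fqn^{\ast}}\chi(\alpha)\psi(\alpha)$; these vanish when $\psi$ is trivial and $\chi$ is not, and are otherwise bounded by $q^{n/2}$ by the Weil bound. Collecting everything yields a clean sufficient condition of the shape $q^{n/2}>W(q^n-1)\,W_q(x^n-1)$, where $W$ and $W_q$ count squarefree divisors in $\mathbb{Z}$, resp.\ in $\F_q[x]$ (i.e.\ $2$ raised to the number of prime, resp.\ irreducible, factors), with a slightly worse constant in the $a=2$ case because of the extra quadratic character.

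Third, since $W(q^n-1)$ and $W_q(x^n-1)$ grow more slowly than any positive power of $q^n$, this inequality holds for all but finitely many pairs $(p,n)$; to make the exceptional set small and explicit I would invoke Cohen's sieving inequality, which replaces $W(q^n-1)W_q(x^n-1)$ by a much smaller quantity built from only a few of the prime/irreducible factors, at the cost of a mild sieving constant, and then settle the finitely many survivors by direct computation (the $k=0$, $a=2$ case being dispatched by running the same argument with $H$ replaced by all of $\fqn$). The main obstacle I anticipate is the sieving step together with the small-$n$ range: the unconditional bound $q^{n/2}>W(q^n-1)W_q(x^n-1)$ already fails for modest fields when $n=3$ or $4$ (for instance $q^{3/2}$ is easily dwarfed by $W(q^3-1)W_q(x^3-1)$), so pushing the threshold down to cover every $p\ge 5$ --- choosing the right primes and factors to sieve by, keeping the sieving constant under control, and handling the residual computation --- is where the real effort lies; a secondary technical point is checking that the additive character $\psi$, whose $\F_q$-order is prescribed as a divisor of $(x^n-1)/(x-1)$, is genuinely nontrivial on $\fqn$, so that the Weil bound applies with no degenerate exceptions.
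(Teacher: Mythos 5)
You should first note what the paper actually does with this statement: it does not prove Conjecture~\ref{conj:mullen}. It records it as an open problem of Mullen--Anderson and attributes its cases to the literature: $(a,k)=(1,0)$ is the Primitive Normal Basis Theorem \cite{LS87,CH03}, $(a,k)=(1,1)$ is the main result of \cite{RT18}, and $(a,k)=(2,0),(2,1)$ are settled in \cite{KR18}. The paper's own machinery (Lemma~\ref{LR1}, Proposition~\ref{charfun}, Theorem~\ref{main}) only gives the sufficient condition $q^{n/2-k}\ge W(q^n-1)W(x^n-1)$, which is nowhere near covering every prime $p\ge 5$ and every $n\ge 3$. So your proposal must be measured against the cited papers, whose general strategy it does resemble.

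As written, however, your outline has concrete gaps. (i) Your indicator for elements of order $(p^n-1)/2$ is incorrect: the $e$-free indicator with $e=(q^n-1)/2$ does not detect generators of the subgroup of order $e$ (a primitive element is $e$-free for every $e$), and when $4\mid q^n-1$, so that $2\mid e$, being $e$-free forces $\alpha$ to be a nonsquare, so multiplying by $\tfrac12(1+\chi_2(\cdot))$ makes your indicator identically zero. The correct treatment (as in \cite{KR18}) splits according to $q^n-1 \bmod 4$ and, e.g., realizes order-$(q^n-1)/2$ elements as squares of primitive elements or uses freeness away from $2$ combined with quadratic/quartic character conditions. (ii) On the additive side, the indicator $\Omega_T$ with $T=(x^n-1)/(x-1)$ detects $T$-free elements of $\fqn$, not elements of $\F_q$-order exactly $T$; in particular every normal element is $T$-free. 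Summing the product of your two indicators over all of $\fqn^*$ therefore counts primitive $T$-free elements, which need not be $1$-normal, so the existence conclusion does not follow. You must either restrict the outer sum to the trace-zero hyperplane $H$ (which injects an extra additive condition into the character sums) or re-parametrize as the paper and \cite{RT18} do: count normal $\beta$ for which $(x-1)\circc\beta$ has the prescribed multiplicative order (Lemma~\ref{LR1}, Proposition~\ref{charfun}); note that this parametrization costs a factor $q$ in the Weil bound, giving $q^{n/2-1}$ rather than $q^{n/2}$ on the main-term side. (iii) The decisive difficulty is exactly the one you defer: the resulting inequality fails for a wide range of $(p,n)$, most severely for $n=3,4$, while the conjecture demands all $p\ge 5$, $n\ge 3$; the Cohen-type sieving, special arguments for small $n$, and explicit computations needed to close this gap constitute the bulk of \cite{RT18} and \cite{KR18}. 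So your plan points in the right direction but, in its present form, is a research outline rather than a proof.
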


We observe that the case $(a, k)=(1, 0)$ concerns the existence of normal elements that are also primitive and is covered by the Primitive Normal Basis Theorem (which is not restricted to prime fields). Recently~\cite{RT18}, the authors prove the case $(a, k)=(1, 1)$ and, in~\cite{KR18}, the authors propose a solution for the cases $(a, k)=(2, 0)$ and $(2, 1)$: in both cases, the proposed conjecture is not only established, but is also extended to general finite fields (removing the restriction to prime fields). 

The previous problem can be viewed as small variations of the Primitive Normal Basis Theorem: elements with multiplicative order $(q^n-1)/2$ and $1$-normal elements are included. In this paper, we discuss some questions concerning variations of this problem and, in particular, we provide partial answers to some general problems that were previously proposed in~\cite{HMPT}. Here we make a short overview of our main results. 

We obtain sufficient conditions for the existence of $k$-normal elements that are also primitive. Considering a relaxation of the primitivity condition, we obtain existence results on $k$-normal elements with large multiplicative order. In general, all of these results are stated under the natural condition that $k$-normal elements actually exist. For the sake of completeness, we further explore the extensions $\F_{q^n}$ of $\F_q$ where we actually have $k$-normal elements for any $0\le k\le n$: in these cases, our existence results are, somehow, unconditional. We further provide special classes of positive integers $n$ for which this phenomena occurs: in this case, the number $n$ yields an ``enumerator polynomial'' whose coefficients provide the exact number of $k$-normal elements in $\F_{q^n}$, for each integer $k$ in the interval $[0, n]$.

\section{Preliminaries}
In this section, we provide a background material that is used along the paper. We start with some arithmetic functions and their polynomial versions. Throughout this paper, for a positive integer $n$, $\varphi(n)$ denotes the Euler Phi function, $\mu(n)$ is the M\"{o}bius function and $d(n)$ denotes the number of distinct divisors of $n$.

\begin{define}\label{def:functions}
\begin{enumerate}[(a)]
\item Let $f(x)$ be a monic polynomial with coefficients in $\F_q$. The Euler Phi Function for polynomials over $\F_q$ is given by $$\Phi_q(f)=\left |\left(\frac{\F_q[x]}{\langle f\rangle}\right)^{*}\right |,$$ where $\langle f\rangle$ is the ideal generated by $f(x)$ in $\F_q[x]$. 
\item If $t$ is a positive integer (or a monic polynomial over $\F_q$), $W(t)$ denotes the number of squarefree (monic) divisors of $t$.
\item If $f(x)$ is a monic polynomial with coefficients in $\F_q$, the Polynomial M\"{o}bius Function $\mu_q$ is given by $\mu_q(f)=0$ if $f$ is not squarefree and $\mu_q(f)=(-1)^r$ if $f$ is a product of $r$ distinct irreducible factors over $\F_q$.  
\end{enumerate}
\end{define}

\subsection{Estimates}\label{subsec:estimates}
Here we present some estimates that are used along the next sections. First, from the main result in \cite{NR83}, we easily obtain the following lemma.
\begin{lemma}\label{lem:estimate-divisor} If $d(m)$ is the number of divisors of $m$, then for all $m\ge 3$,
$$d(m)<m^{\frac{1.1}{\log\log m}}.$$
\end{lemma}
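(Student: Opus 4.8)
The plan is to derive this as an immediate corollary of the explicit bound for the divisor function due to Nicolas and Robin \cite{NR83}. Their main result furnishes an absolute constant $C$, which may be taken strictly smaller than $1.1$ (in fact $C = 1.066\ldots$ works, this being the maximum of $\frac{\log d(m)\,\log\log m}{\log m}$ over the integers $m\ge 3$), such that
\[
d(m)\le m^{C/\log\log m}\qquad\text{for every integer }m\ge 3;
\]
equivalently, $\frac{\log d(m)\,\log\log m}{\log m}\le C$ on $[3,\infty)$. So the first step is simply to record this bound.

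For the second step, observe that for $m\ge 3$ we have $m>1$ and $\log\log m>0$, so the map $t\mapsto m^{t}$ is strictly increasing; since $C<1.1$ this gives $m^{C/\log\log m}<m^{1.1/\log\log m}$, and combining with the Nicolas--Robin estimate yields the claimed strict inequality $d(m)<m^{1.1/\log\log m}$. If one only has at hand a version of \cite{NR83} valid for $m$ beyond some explicit threshold, the finitely many remaining values are dispatched by direct computation, the right-hand side being enormous for small $m$ since $\log\log m$ is then close to $0$. There is no genuine analytic obstacle here; the only point requiring care is the numerical value of the constant coming from \cite{NR83}, and the choice $1.1$ is made precisely so as to leave a comfortable margin above it.
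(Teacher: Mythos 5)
Your proposal is correct and coincides with the paper's own (essentially unstated) argument: the lemma is presented as an immediate consequence of the main theorem of \cite{NR83}, whose explicit constant $1.066\ldots$ lies below $1.1$, exactly as you use it. The only detail worth keeping is the one you already note, namely that $\log\log m>0$ for $m\ge 3$ makes the passage from the constant $1.066\ldots$ to $1.1$ a strict inequality.
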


From the previous lemma, the following result is straightforward.

\begin{lemma}\label{lem:estimate}
For any $m\ge 3$ and $x>0$,
\begin{equation*}\sum_{d|m\atop{d\le x}}\varphi(d)< x\cdot m^{\frac{1.1}{\log\log m}}.\end{equation*}
\end{lemma}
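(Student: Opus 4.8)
The plan is to bound the summand and the number of summands separately, and then invoke Lemma~\ref{lem:estimate-divisor}. First I would observe that every divisor $d$ of $m$ that appears in the sum satisfies $\varphi(d)\le d\le x$: the inequality $\varphi(d)\le d$ holds for every positive integer $d$, and the summation index is restricted to $d\le x$. Hence each term of the sum is at most $x$.

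Next, the number of indices $d$ occurring in the sum is clearly at most the total number of divisors of $m$, namely $d(m)$. Combining the two observations gives
\[
\sum_{\substack{d\mid m\\ d\le x}}\varphi(d)\ \le\ x\cdot d(m).
\]
Since $m\ge 3$, Lemma~\ref{lem:estimate-divisor} yields $d(m)<m^{1.1/\log\log m}$, and multiplying this strict inequality by $x>0$ produces the claimed bound. I would also note that the argument is unaffected if the index set happens to be empty (for instance when $x<1$): in that case the left-hand side is $0$, while the right-hand side $x\cdot m^{1.1/\log\log m}$ is strictly positive because $x>0$ and $m\ge 3$, so the inequality still holds.

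There is essentially no obstacle here: the whole point is that the strictness is inherited directly from Lemma~\ref{lem:estimate-divisor}, so no case analysis on whether $x$ is an integer (and hence whether $\varphi(d)\le d$ could be an equality for $d=x$) is required. The substantive content of the estimate lies entirely in the divisor-counting bound that has already been recorded.
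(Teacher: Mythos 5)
Your argument is correct and is exactly the intended one: the paper gives no explicit proof, stating only that the bound is straightforward from Lemma~\ref{lem:estimate-divisor}, and the implicit reasoning is precisely your bound $\varphi(d)\le d\le x$ on each term together with the count of at most $d(m)$ terms. Your attention to strictness and to the empty-sum case is fine but not essential.
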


For the number of squarefree divisors function, we have the following bounds.
\begin{lemma}\label{lem:estimate-divisor2}
If $W(t)$ is the number of squarefree divisors of $t$, then for all $t\ge 3$,
$$W(t-1)<t^{\frac{0.96}{\log\log t}}.$$
\end{lemma}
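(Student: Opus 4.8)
The plan is to reduce the bound on the squarefree-divisor function $W$ to the divisor-bound already available in Lemma~\ref{lem:estimate-divisor}, and then clear up the small cases by direct computation. First I would recall that $W(m) = 2^{\omega(m)}$, where $\omega(m)$ is the number of distinct prime factors of $m$, while $d(m) = \prod_{p^a \| m}(a+1) \ge 2^{\omega(m)} = W(m)$. Hence $W(m) \le d(m)$ for every positive integer $m$, and in particular $W(t-1) \le d(t-1)$. So it suffices to bound $d(t-1)$.

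Next I would like to invoke Lemma~\ref{lem:estimate-divisor} with $m = t-1$, which gives $d(t-1) < (t-1)^{1.1/\log\log(t-1)}$ as soon as $t-1 \ge 3$, i.e. $t \ge 4$. The remaining task is purely analytic: show that $(t-1)^{1.1/\log\log(t-1)} \le t^{0.96/\log\log t}$ for $t$ large enough, equivalently
\[
\frac{1.1}{\log\log(t-1)}\,\log(t-1) \le \frac{0.96}{\log\log t}\,\log t .
\]
Since $\log(t-1) < \log t$ and $\log\log(t-1) < \log\log t$, this is not immediate from monotonicity alone — the factor $1.1$ versus $0.96$ has to be absorbed by the growth of $\log\log t$. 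The cleanest route is to note that $d(t-1) \le W(t-1)\cdot(\text{something})$ is the wrong direction, so instead I would sharpen the exponent: for $t-1$ with $\omega(t-1)=r$, the smallest such $t-1$ is the product $p_1\cdots p_r$ of the first $r$ primes, and one checks $2^r < (p_1\cdots p_r)^{0.96/\log\log(p_1\cdots p_r)}$ directly for all $r$ from some point on, using $\log(p_1\cdots p_r) = \theta(p_r) \sim p_r$ and $p_r \sim r\log r$; this forces the exponent $0.96/\log\log(t-1)$ to beat $r\log 2/\log(t-1)$ once $r$ (hence $t$) is sufficiently large. Replacing $t-1$ by $t$ in the base only helps.

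The main obstacle — and the only place real care is needed — is the threshold: the asymptotic inequality is comfortable, but for small $t$ the quantity $\log\log t$ is tiny (indeed $\log\log t < 1$ for $t < e^e \approx 15.15$, and $\log\log t$ is negative for $t < e$), so the stated bound is genuinely delicate in a range like $3 \le t \le 100$ or so. I would handle this by a finite check: for each $t$ below an explicit cutoff $t_0$ (chosen so that the asymptotic argument above is valid for $t \ge t_0$), compute $W(t-1)$ exactly and verify $W(t-1) < t^{0.96/\log\log t}$ numerically, paying attention to the $t$ near $e^e$ where the right-hand side dips. The structure of the final write-up is therefore: (i) $W \le d$; (ii) apply Lemma~\ref{lem:estimate-divisor}; (iii) the asymptotic comparison of exponents via the primorial extremal case; (iv) a finite verification for small $t$. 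I expect step (iv), together with pinning down the exact cutoff in (iii), to be where essentially all the work lies.
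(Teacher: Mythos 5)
Your outline as written has a genuine gap, and its first half is a dead end. Combining $W(t-1)\le d(t-1)$ with Lemma~\ref{lem:estimate-divisor} gives $W(t-1)<(t-1)^{1.1/\log\log(t-1)}$, and this quantity is \emph{eventually larger} than $t^{0.96/\log\log t}$, not smaller: the two exponents of the logarithm are $1.1\log(t-1)/\log\log(t-1)$ and $0.96\log t/\log\log t$, and their ratio tends to $1.1/0.96>1$. So the comparison you propose fails for all large $t$, and your hope that the discrepancy between $1.1$ and $0.96$ ``has to be absorbed by the growth of $\log\log t$'' cannot be realized; no finite check repairs this. Steps (i)--(ii) of your final structure therefore cannot appear in a correct proof, and the constant in the exponent must be brought below $0.96$ \emph{before} any such comparison, which the divisor bound of Lemma~\ref{lem:estimate-divisor} does not do.

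Your fallback --- bounding $W(t-1)=2^{\omega(t-1)}$ directly through the extremal primorial case, with explicit Chebyshev-type estimates and a finite verification --- is sound in principle, but it amounts to reproving an explicit bound of the form $\omega(t-1)\le c\,\log t/\log\log t$ with $c\log 2<0.96$, and you leave precisely that part (the explicit constant, the cutoff $t_0$, and the numerical check, which you yourself identify as ``where essentially all the work lies'') unexecuted. This is exactly the point where the paper takes a shortcut: it cites Ineq.~(4.1) of \cite{COT}, namely $\omega(t-1)\le 1.38402\,\log t/\log\log t$ for $t\ge 3$, and concludes in one line from $W(t-1)=2^{\omega(t-1)}$ and $1.38402\cdot\log 2<0.96$. (Incidentally, the small range $3\le t\le 15$ you worry about is harmless: there $\log\log t>0$, the right-hand side is at least $e^{0.96e}\approx 13.6$ at its minimum near $t=e^{e}$, while $W(t-1)\le 4$.) To complete your argument you would either have to carry out the explicit primorial computation in full or, as the paper does, invoke a published explicit bound on $\omega$.
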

\begin{proof} Let $w(t)$ be the number of distinct prime divisors of $t$. According to Ineq.(4.1) of~\cite{COT}, $w(t-1)\le \frac{1.38402\cdot \log t}{\log \log t}$ if $t\ge 3$. Therefore, $$W(t-1)=2^{w(t-1)}\le 2^{\frac{1.38402\log t}{\log \log t}}=t^{\log 2\cdot \frac{1.38402}{\log \log t}}<t^{\frac{0.96}{\log\log t}}.$$\end{proof}

\begin{lemma}[see Lemma~3.3 of~\cite{CH03}]\label{lem:cohen-hucz}
For any integer $m$,
$$W(m)<4.9\cdot m^{1/4}.$$
\end{lemma}

\begin{prop}\label{prop:estimate-euler}
For any positive integers $k$ and $q\ge 2$, the following holds:
$$(q-1)^k\ge 4^{-k/q}q^k.$$
\end{prop}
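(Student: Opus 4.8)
The inequality $(q-1)^k \ge 4^{-k/q} q^k$ is equivalent, after dividing by $q^k$ and taking $k$-th roots, to the single-variable claim $\left(1 - \tfrac{1}{q}\right) \ge 4^{-1/q}$, i.e.
\[
\left(1 - \frac{1}{q}\right)^{q} \ge \frac{1}{4}
\]
for every integer $q \ge 2$. So the plan is to reduce immediately to this clean form and then prove it by a standard monotonicity/convexity argument. First I would verify the base case $q = 2$ directly: $(1 - 1/2)^2 = 1/4$, so equality holds there (which also shows the constant $4$ is sharp). For $q \ge 3$ one wants a strict inequality, and the natural route is to show that the function $g(x) = \left(1 - 1/x\right)^x = \exp\!\big(x \log(1 - 1/x)\big)$ is increasing on $[2, \infty)$, whence $g(q) \ge g(2) = 1/4$.

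The key step is therefore the monotonicity of $g$. I would set $h(x) = x \log(1 - 1/x) = x \log(x-1) - x \log x$ and compute
\[
h'(x) = \log(x-1) - \log x + \frac{x}{x-1} - 1 = \log\!\left(1 - \frac{1}{x}\right) + \frac{1}{x-1}.
\]
Using the expansion $-\log(1 - t) = \sum_{j \ge 1} t^j / j$ with $t = 1/x \in (0, 1)$, one gets $-\log(1 - 1/x) = \frac{1}{x} + \frac{1}{2x^2} + \cdots < \frac{1}{x}\cdot\frac{1}{1 - 1/x} = \frac{1}{x-1}$, so $h'(x) > 0$ on $(1, \infty)$. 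Hence $h$ is increasing, so $g = e^h$ is increasing, and $g(q) \ge g(2) = 1/4$ for all integers $q \ge 2$. Raising both sides to the power $k$ and multiplying by $q^k$ gives the claimed bound.

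I do not expect a serious obstacle here; the only mild care needed is handling the boundary. One can equally avoid calculus entirely: since $(1 - 1/q)^q$ is a well-known increasing sequence converging to $1/e > 1/4$, the inequality $(1-1/q)^q \ge 1/4$ for $q \ge 2$ follows from the classical fact that $\{(1-1/n)^n\}_{n\ge 2}$ is nondecreasing (provable by the AM–GM inequality applied to the $q$ numbers $1 - 1/q$ together with the number $1$, or by the binomial theorem). If a computer-free, fully elementary writeup is preferred, I would use that route; otherwise the one-line derivative computation above is the shortest. Either way, the main point is the reduction to $\left(1 - \frac{1}{q}\right)^{q} \ge \frac14$, after which everything is routine.
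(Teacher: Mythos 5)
Your proposal is correct and rests on the same key fact as the paper's proof: reducing the claim to $(1-1/q)^q \ge (1-1/2)^2 = 1/4$ via the monotonicity of $(1-1/x)^x$ (the paper simply cites that the sequence $a_n=(1-1/n)^n$ is increasing, whereas you additionally supply a derivative/series proof of that monotonicity). No gaps; the two arguments are essentially identical.
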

\begin{proof} We observe that $a_n=(1-1/n)^n$ is an increasing sequence. Therefore, for any $q\ge 2$, $a_q\ge a_2=4^{-1}$ and so $$(q-1)^k=((q-1)^q)^{k/q}=(a_q\cdot q^{q})^{k/q}\ge 4^{-k/q}q^k.$$\end{proof}

\subsection{Linearized polynomials and the $\F_q$-order}
Here we present some definitions and basic results on linearized polynomials over finite fields that are frequently used in this paper.

\begin{define}Let $f\in \F_q[x]$ with $f(x)=\sum_{i=0}^ra_ix^i$.
\begin{enumerate}[(a)]
\item The polynomial $L_f(x):=\sum_{i=0}^ra_ix^{q^{i}}$ is the $q$-associate of $f$.
\item For $\alpha\in \fqn$, we set $f\circc \alpha=L_f(\alpha)=\sum_{i=0}^ra_i\alpha^{q^{i}}$.
\end{enumerate}
\end{define}

The $q$-associates have interesting additional properties.

\begin{theorem}\label{thm:linearized}
Let $f, g\in \F_q[x]$. The following hold:
\begin{enumerate}[(a)]
\item $L_f(x)+L_g(x)=L_{f+g}(x)$,
\item $L_{fg}(x)=L_f(L_g(x))=L_g(L_f(x))$.
\end{enumerate}
\end{theorem}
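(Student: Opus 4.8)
The statement to prove is Theorem~\ref{thm:linearized}, which asserts that the $q$-associate map $f \mapsto L_f$ is additive and multiplicative in the sense that $L_f + L_g = L_{f+g}$ and $L_{fg} = L_f \circ L_g = L_g \circ L_f$. My plan is to prove part (a) by a direct comparison of coefficients, and part (b) by reducing to the case of monomials using part (a) together with the $\F_q$-linearity of linearized polynomials.

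For part (a), I would simply write $f(x) = \sum_{i=0}^r a_i x^i$ and $g(x) = \sum_{i=0}^r b_i x^i$ (padding with zero coefficients so both sums run to the same index $r$), so that $f + g = \sum_{i=0}^r (a_i + b_i) x^i$. Then by definition $L_{f+g}(x) = \sum_{i=0}^r (a_i + b_i) x^{q^i} = \sum_{i=0}^r a_i x^{q^i} + \sum_{i=0}^r b_i x^{q^i} = L_f(x) + L_g(x)$. This is immediate and requires no real work.

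For part (b), the key observation is that every $q$-associate $L_h(x) = \sum_i c_i x^{q^i}$ is a $\F_q$-linear map on any extension of $\F_q$, since the Frobenius $x \mapsto x^q$ is additive and fixes $\F_q$; in particular $L_h(u + v) = L_h(u) + L_h(v)$ and $L_h(cu) = c\,L_h(u)$ for $c \in \F_q$. Granting this, I would first check the claim for monomials: if $f = x^i$ and $g = x^j$ then $fg = x^{i+j}$, and $L_{x^i}(x) = x^{q^i}$, $L_{x^j}(x) = x^{q^j}$, so $L_{x^i}(L_{x^j}(x)) = (x^{q^j})^{q^i} = x^{q^{i+j}} = L_{x^{i+j}}(x)$, which also equals $L_{x^j}(L_{x^i}(x))$ by symmetry of $i+j$. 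For general $f = \sum_i a_i x^i$ and fixed $g$, I would use part (a) and $\F_q$-linearity to expand:
\[
L_f(L_g(x)) = \sum_i a_i \bigl(L_g(x)\bigr)^{q^i} = \sum_i a_i\, L_{x^i}(L_g(x)),
\]
and then it suffices to handle $L_{x^i}(L_g(x))$ for each $i$. Writing $g = \sum_j b_j x^j$ and using that $L_{x^i}$ is $\F_q$-linear gives $L_{x^i}(L_g(x)) = L_{x^i}\bigl(\sum_j b_j x^{q^j}\bigr) = \sum_j b_j\, L_{x^i}(x^{q^j}) = \sum_j b_j x^{q^{i+j}} = L_{x^i g}(x)$, and then summing over $i$ with part (a) yields $L_f(L_g(x)) = \sum_i L_{a_i x^i g}(x) = L_{fg}(x)$. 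Commutativity $L_f \circ L_g = L_g \circ L_f$ follows since both equal $L_{fg} = L_{gf}$.

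The only point that needs a little care — and the main (very minor) obstacle — is justifying the $\F_q$-linearity of $L_h$, i.e. that $(u+v)^{q^i} = u^{q^i} + v^{q^i}$ and $c^{q^i} = c$ for $c \in \F_q$; this is the Frobenius/freshman's-dream identity in characteristic $p$ together with Fermat's little theorem for finite fields, and it underlies the whole argument. Everything else is bookkeeping with indices. I would present part (a) in one line and part (b) via the monomial reduction as above, remarking at the start that linearized polynomials are $\F_q$-linear maps.
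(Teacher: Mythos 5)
Your proof is correct, and it is essentially the argument the paper has in mind: the paper simply states that the result ``follows by direct calculations'' and defers to Section~3.4 of \cite{LN}, and your coefficient comparison for (a) together with the monomial reduction for (b) --- resting on the Frobenius identities $(u+v)^{q^i}=u^{q^i}+v^{q^i}$ and $c^{q^i}=c$ for $c\in\F_q$, which are equally valid as polynomial identities in $\F_q[x]$ --- is precisely that direct calculation written out. No gaps; nothing further is needed.
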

\begin{proof}
This result follows by direct calculations. For more details, see Section 3.4 of \cite{LN}.
\end{proof}

For $\alpha\in\fqn$, we set $\I_{\alpha}=\{f\in\F_q[x]\;|\; f\circc \alpha=0\}$. From Theorem~\ref{thm:linearized}, $\I_{\alpha}$ is an ideal and is direct to verify that $(x^n-1)\circc \alpha=\alpha^{q^n}-\alpha=0$. Therefore, $\I_{\alpha}$ contains $x^n-1$ and so $\I_{\alpha}\ne \{0\}$ is generated by a nonzero polynomial $m_{\alpha, q}(x)$, which we can suppose to be monic. 
\begin{define}
The polynomial $m_{\alpha, q}\in \F_q[x]$ is defined as the $\F_q$-order of $\alpha$.
\end{define}
For instance, the $\F_q$-order of  the element $0$ is $m_{0, q}(x)=1$ and, for any $\alpha\in \F_q^*$, $m_{\alpha, q}(x)=x-1$ (since $\alpha^q-\alpha=0$). In general, for $\alpha\in \fqn$, $m_{\alpha, q}(x)$ divides $x^n-1$ and so this polynomial has degree $k$ for some $0\le k\le n$. In \cite{HMPT}, the authors show that an element $\alpha$ is $k$-normal if and only if its $\F_q$-order is a polynomial of degree $n-k$ (see Theorem 3.2 of~\cite{HMPT}). Their proof uses the rank of a specific matrix that measures the dimension of the $\F_q$-vector space $V_{\alpha}$ generated by $\alpha$ and its conjugates. Here we present a much simpler and shorter proof of this result.

\begin{prop}\label{prop:char-k-normal}
Let $\alpha \in \F_{q^n}$ be an element with $\F_q$-order $h$ and let $V_{\alpha}$ be the $\F_q$-vector space generated by $\alpha$ and its conjugates $\alpha^q, \ldots, \alpha^{q^{n-1}}$. Then $V_{\alpha}$ has dimension $\deg(h)$. In particular, $\alpha$ is $k$-normal if and only if $m_{\alpha, q}(x)$ has degree $n-k$. 
\end{prop}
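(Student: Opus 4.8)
The plan is to relate the $\F_q$-vector space $V_\alpha$ to the quotient ring $\F_q[x]/\langle m_{\alpha,q}\rangle$ via the evaluation map $f \mapsto f\circc\alpha$. First I would define the $\F_q$-linear map $\psi \colon \F_q[x] \to \fqn$ by $\psi(f) = f\circc\alpha = L_f(\alpha)$; part (a) of Theorem~\ref{thm:linearized} guarantees this is $\F_q$-linear. By the very definition of $\I_\alpha$, the kernel of $\psi$ is $\I_\alpha = \langle m_{\alpha,q}\rangle$, so $\psi$ descends to an injective $\F_q$-linear map $\overline\psi \colon \F_q[x]/\langle m_{\alpha,q}\rangle \to \fqn$. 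Hence $\dim_{\F_q}\mathrm{Im}(\psi) = \dim_{\F_q}\bigl(\F_q[x]/\langle m_{\alpha,q}\rangle\bigr) = \deg(m_{\alpha,q}) = \deg(h)$, where the middle equality is the standard fact that $\F_q[x]/\langle h\rangle$ has dimension $\deg(h)$ as an $\F_q$-vector space (a basis is given by $1, x, \ldots, x^{\deg(h)-1}$).

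The second step is to identify $\mathrm{Im}(\psi)$ with $V_\alpha$. On one hand, $\psi(x^i) = L_{x^i}(\alpha) = \alpha^{q^i}$, so every conjugate $\alpha^{q^i}$ lies in $\mathrm{Im}(\psi)$, and since $V_\alpha$ is spanned by these conjugates we get $V_\alpha \subseteq \mathrm{Im}(\psi)$. On the other hand, for any $f = \sum_i a_i x^i \in \F_q[x]$ we have $\psi(f) = \sum_i a_i \alpha^{q^i}$, which is an $\F_q$-linear combination of the conjugates $\alpha^{q^i}$; note that $\alpha^{q^i}$ for $i \ge n$ reduces, via $\alpha^{q^n} = \alpha$, to one of $\alpha, \alpha^q, \ldots, \alpha^{q^{n-1}}$, so $\psi(f) \in V_\alpha$. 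Thus $\mathrm{Im}(\psi) = V_\alpha$, and combining with the first step gives $\dim_{\F_q} V_\alpha = \deg(h)$.

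Finally, the ``in particular'' statement is immediate: since $m_{\alpha,q}$ divides $x^n-1$, its degree lies in $[0,n]$, and $\alpha$ is $k$-normal exactly when $\dim_{\F_q} V_\alpha = n-k$, which by the above is equivalent to $\deg(m_{\alpha,q}) = n-k$. There is no real obstacle here; the only point requiring a little care is the reduction of high-order conjugates $\alpha^{q^i}$ with $i \ge n$ back into the span of $\alpha, \alpha^q, \ldots, \alpha^{q^{n-1}}$, which is what makes the inclusion $\mathrm{Im}(\psi) \subseteq V_\alpha$ hold with $V_\alpha$ as defined (a finite-dimensional space) rather than an a priori larger span. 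This is handled cleanly by the identity $(x^n-1)\circc\alpha = 0$ already noted in the text.
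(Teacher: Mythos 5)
Your proof is correct and takes essentially the same route as the paper: both consider the $\F_q$-linear evaluation map $f\mapsto f\circc\alpha$ and compute $\dim V_\alpha$ from the kernel generated by $m_{\alpha,q}$. The only cosmetic difference is that you work with all of $\F_q[x]$ and invoke the first isomorphism theorem (hence your extra step reducing $\alpha^{q^i}$ for $i\ge n$ via $\alpha^{q^n}=\alpha$), whereas the paper restricts the map to polynomials of degree at most $n-1$ and applies rank--nullity directly.
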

\begin{proof} Let $s=\deg(h)$ and let $V_n$ be the set of polynomials of degree at most $n-1$ over $\F_q$. Of course, $V_n$ is an $\F_q$-vector space of dimension $n$: the elements $1, x, \ldots, x^{n-1}$ form a basis for $V_n$. Let and let $\tau_{\alpha}: V_n\to V_{\alpha}$ be defined as follows: for $f\in V_n$ with $f=\sum_{i=0}^{n-1}a_ix^i$, set $\tau_{\alpha}(f)=f\circc \alpha=\sum_{i=0}^{n-1}a_i\alpha^{q^i}$. From definition, the map $\tau_{\alpha}$ is linear and onto. From the definition of $\F_q$-order, the kernel of $\tau_{\alpha}$ is given by $$U_{\alpha}=\{g\cdot h\,|\, g\in \F_q[x]\;\,\text{and}\;\,\deg(g)\le n-1-s\},$$ which is an $\F_q$-vector space isomorphic to $V_{n-s}$. The rank-nullity theorem yields $\dim V_{\alpha}=n-\dim U_{\alpha}=n-(n-s)=s$.\end{proof}

In particular, an element $\alpha$ is normal if and only if $m_{\alpha, q}(x)=x^n-1$. As follows, we have a formula for the number of $k$-normal elements.

\begin{lemma}[see Theorem 3.5 of~\cite{HMPT}]\label{eq:count} The number $N_k$ of $k$-normal elements of $\F_{q^n}$ over $\F_q$ is given by
\begin{equation*}\label{eq:num-k-normal}N_k=\sum_{h|x^n-1\atop{\deg(h)=n-k}}\Phi_q(h),\end{equation*}
where the divisors are monic and polynomial division is over $\F_q$.
\end{lemma}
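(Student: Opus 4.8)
The plan is to count $k$-normal elements by partitioning $\F_{q^n}$ according to the $\F_q$-order of each element, exactly as one counts elements of a cyclic group by their multiplicative order. By Proposition~\ref{prop:char-k-normal}, an element $\alpha$ is $k$-normal precisely when its $\F_q$-order $m_{\alpha,q}(x)$ is a monic divisor of $x^n-1$ of degree $n-k$. So the first step is to write
$$N_k=\sum_{h\mid x^n-1\atop{\deg(h)=n-k}} \#\{\alpha\in\F_{q^n}\,:\, m_{\alpha,q}(x)=h\},$$
and it remains to show that for each monic divisor $h$ of $x^n-1$, the number of $\alpha\in\F_{q^n}$ with $\F_q$-order exactly $h$ equals $\Phi_q(h)$.

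Next I would set up the module-theoretic framework. Via the action $f\circc\alpha=L_f(\alpha)$, Theorem~\ref{thm:linearized} makes $\F_{q^n}$ a module over $\F_q[x]$, annihilated by $x^n-1$, hence a module over the principal ideal ring $R=\F_q[x]/\langle x^n-1\rangle$. A standard dimension count (or the normal basis theorem) shows $\F_{q^n}$ is a \emph{free} $R$-module of rank $1$: a normal element $\theta$ is exactly a generator, since $m_{\theta,q}=x^n-1$ means the annihilator of $\theta$ is trivial in $R$, and then $\F_q[x]\circc\theta$ has $\F_q$-dimension $n$, forcing $\F_{q^n}=R\circc\theta$. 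Thus $\F_{q^n}\cong R$ as $R$-modules, and the $\F_q$-order of $\alpha=f\circc\theta$ is just $(x^n-1)/\gcd(f,x^n-1)$ — i.e. the "order" of the element $f$ in $R$ in the divisor-lattice sense.

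The counting step is then purely about $R=\F_q[x]/\langle x^n-1\rangle$: for a fixed monic $h\mid x^n-1$, I must count residues $f\in R$ whose order is exactly $h$, equivalently $\gcd(f,x^n-1)=(x^n-1)/h=:g$. Writing $f=g\cdot u$, this amounts to counting $u\bmod h$ with $\gcd(u,h)=1$, and the number of such $u$ is by definition $\Phi_q(h)=\bigl|(\F_q[x]/\langle h\rangle)^\ast\bigr|$. Summing over all monic divisors $h$ of $x^n-1$ of degree $n-k$ gives the claimed formula for $N_k$. (As a sanity check, summing $N_k$ over all $k$ gives $\sum_{h\mid x^n-1}\Phi_q(h)=q^n$, the polynomial analogue of $\sum_{d\mid n}\varphi(d)=n$.)

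The only real subtlety — the "main obstacle" — is justifying that $\F_{q^n}$ is free of rank $1$ over $R$; once that structural fact is in hand, everything reduces to the elementary arithmetic of the PIR $R$. This freeness is equivalent to the classical normal basis theorem, so I would either cite it directly or give the short dimension argument above: the cyclic submodule generated by a normal element already has full $\F_q$-dimension $n$, hence equals $\F_{q^n}$, and its annihilator is $\langle x^n-1\rangle$, so it is isomorphic to $R$.
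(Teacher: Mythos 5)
Your proposal is correct, and its skeleton matches the paper's: partition $\F_{q^n}$ by $\F_q$-order and use Proposition~\ref{prop:char-k-normal} to identify $k$-normality with $\deg m_{\alpha,q}=n-k$. The difference is in the key counting fact that each monic divisor $h$ of $x^n-1$ is the $\F_q$-order of exactly $\Phi_q(h)$ elements: the paper simply cites Ore (Theorem 11 of~\cite{O34}), whereas you prove it yourself by exhibiting $\F_{q^n}$ as a free rank-one module over $R=\F_q[x]/\langle x^n-1\rangle$ (the dimension count for the cyclic submodule generated by a normal element indeed follows from Proposition~\ref{prop:char-k-normal}) and then counting residues $u$ modulo $h$ with $\gcd(u,h)=1$; note that the identity $\gcd(gu,gh)=g\gcd(u,h)$ you implicitly use is valid even when $x^n-1$ is not squarefree, so the argument survives the case $p\mid n$. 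What each route buys: the paper's citation is shorter and, in Ore's original form, the counting result yields the normal basis theorem as a by-product (take $h=x^n-1$); your argument is self-contained except that it takes the classical normal basis theorem as an external input, so it could not be recycled to deduce that theorem from the $k=0$ case of the lemma without circularity. For the lemma as stated, and as it is used in this paper (where the Primitive Normal Basis Theorem is quoted independently), this is harmless.
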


The proof of the previous lemma follows from Proposition~\ref{prop:char-k-normal} and the fact that, for each monic divisor $f$ of $x^n-1$, there exist $\Phi_q(f)$ elements $\alpha\in \F_{q^n}$ for which $m_{\alpha, q}(x)=f$ (see Theorem 11 of~\cite{O34}). 

\begin{remark}\label{remark:k-normals-div}We observe that there exist $k$-normal elements in $\fqn$ if and only if $x^n-1$ is divisible by a polynomial $h\in \F_q[x]$ of degree $n-k$ (or, equivalently, $x^n-1$ is divisible by a polynomial $h\in \F_q[x]$of degree $k$). This equivalence is frequently used along the paper. 
\end{remark}

\subsection{Characters and characteristic functions}
Here we use the additive-multiplicative character method of  Lenstra and Schoof in the construction of certain characteristic functions over finite fields. This method has been used by many different authors in a wide variety of existence problems. For this reason, we skip some details, which can be found in \cite{CH03}.  We introduce the notion of {\it freeness}.

\begin{define}
\begin{enumerate}
\item If $m$ divides $q^n-1$, an element $\alpha \in \F_{q^n}^*$ is $m$-free if $\alpha = \beta^d$ for any divisor $d$ of $m$ implies $d=1$ and $\alpha=\beta$. 
\item If $m(x)$ divides $x^n-1$, an element $\alpha\in \F_{q^n}$ is $m(x)$-free if $\alpha = h \circc \beta=L_h(\beta)$ for any divisor $h(x)$ of $m(x)$ implies $h=1$ and $\alpha=\beta$. 
\end{enumerate}
\end{define}

\begin{remark}\label{remark:freeness}
It is well-known that primitive elements correspond to the $(q^n-1)$-free elements and normal elements correspond to the $(x^n-1)$-free elements (see Proposition 5.2 and Theorem 5.3 of~\cite{HMPT}).
\end{remark}
\subsubsection{Characters and freeness}\label{subsub:char-free}
Let $\alpha$ be a primitive element of $\fqn$. A typical multiplicative character $\eta$ of $\fqn^*$ is a function $\eta:\fqn^*\to \C$ given by $\eta(\alpha^k)=e^{\frac{2\pi i dk}{q^n-1}}$ for some positive integer $d$. Its order is the least positive integer $s$ such that $\eta(\beta)^s=1$ for any $\beta \in \fqn^*$. The set of distinct multiplicative characters of $\fqn^*$ is a cyclic group, isomorphic to $\fqn^*$. The character $\eta_1$ given by $\eta_1(\alpha^k)=1$ is the trivial multiplicative character. As usual, we extend the multiplicative characters to $0\in \F_{q^n}$ by setting $\eta(0)=1$ if $\eta$ is trivial and $\eta(0)=0$, otherwise.

If $p$ is the characteristic of $\F_q$ and $q=p^s$, let $\chi$ be the mapping $\chi:\fqn\to \C$ defined by
$$\chi(\alpha)=e^{\frac{2\pi \Tr_{q^n/p}(\alpha)}{p}}, \alpha\in \fqn,$$
where $\Tr_{q^n/p}(\alpha)=\sum_{i=0}^{ns-1}\alpha^{p^i}\in \F_p$ is the trace function of $\fqn$ over $\F_p$. In this case, $\chi$ is the canonical additive character. Moreover, for each $\beta \in \fqn$, the mapping $\chi_{\beta}$ given by $\chi_{\beta}(\alpha)=\chi(\beta\alpha)$ for $\alpha\in \fqn$ is another additive character of $\fqn$ and, in fact, any additive character of $\fqn$ is of this form. The set of additive characters is a group $G$, isomorphic to $\F_{q^n}$ (as an additive group) via the map $\tau:\F_{q^n}\to G$ given by $\tau(\beta)=\chi_{\beta}$. In this correspondence, $\chi_{\beta}$ has $\F_q$-order $h$ if $\beta$ has $\F_q$-order $h$. Of course, $\chi_{0}(\alpha)=1$ for every $\alpha\in \F_q$; $\chi_0$ is the trivial additive character.

The concept of freeness derives some characteristic functions for primitive and normal elements. We pick the notation of \cite{HMPT}.

\vspace{.2cm}
{\bf Multiplicative Part}: $\int\limits_{d|m}\eta_d$ stands for the sum $\sum_{d|t}\frac{\mu(d)}{\varphi(d)}\sum_{(d)}\eta_d$, where $\eta_d$ is a typical multiplicative character of $\F_{q^n}$ of order $d$, and the sum $\sum_{(d)}\eta_d$ runs over all the multiplicative characters of order $d$. It is worthy of mentioning that there exist $\varphi(d)$ multiplicative characters of order $d$.

\vspace{.2 cm}
{\bf Additive Part}:  If $D$ is a monic divisor of $x^n-1$ over $\F_q$, let $\Delta_D$ be the set of all $\delta\in \F_{q^n}$ such that $\chi_{\delta}$ has $\F_q$-order $D$. For instance, $\Delta_1=\{0\}$ and $\Delta_{x-1}=\F_q^*$. Analogously, $\int\limits_{D|T}\chi_{\delta_D}$ stands for the sum $\sum_{D|T}\frac{\mu_q(D)}{\Phi_q(D)}\sum_{(\delta_D)}\chi_{\delta_D}$, where $\chi_{\delta_D}$ is a typical additive character of $\F_{q^n}$ of $\F_q$-Order $D$ and the sum $\sum_{(\delta_D)}\chi_{\delta_D}$ runs over all the additive characters whose $\F_q$-order equals $D$, i.e., $\delta_D\in \Delta_D$. For instance, $\sum_{(\delta_1)}\chi_{\delta_1}=\chi_0$ and $\sum_{(\delta_{x-1})}\chi_{\delta_{x-1}}=\sum_{c\in \F_q^*}\chi_c$. In general, there are $\Phi_q(D)$ multiplicative additive characters of $\F_q$-order $D$.

For each divisor $t$ of $q^n-1$ and each monic divisor $T\in \F_q[x]$ of $x^n-1$, set $\theta(t)=\frac{\varphi(t)}{t}$ and $\Theta(T)=\frac{\Phi_q(T)}{q^{\deg T}}$. 
\begin{theorem}[see Section 5.2 of \cite{HMPT}]\label{thm:charfree}The following hold.
\begin{enumerate}
\item For $w \in \fqn^*$ and $t$ be a positive divisor of $q^n-1$, 
\[\omega_t(w) = \theta(t) \int_{d|t} \eta_{d}(w) = \begin{cases} 1 & \text{if $w$ is $t$-free,} \\ 0 & \text{otherwise.} \end{cases}\]
\item For $w \in \fqn$ and $D$ be a monic divisor of $x^n-1$,
\[\Omega_T(w) = \Theta(T) \int_{D|T} \chi_{\delta_D}(w) = \begin{cases} 1 & \text{if $w$ is $D$-free,} \\ 0 & \text{otherwise.} \end{cases}\]
\end{enumerate}
\end{theorem}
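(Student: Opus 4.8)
The plan is to prove the two parts in parallel, reading part (2) as the $\F_q[x]$-linear transcription of part (1): multiplicative characters of $\fqn^*$ become additive characters of $\fqn$, the cyclic group $\fqn^*$ becomes the $\F_q[x]$-module $(\fqn,\circc)$, raising to a $p$-th power becomes applying $L_P$ for an irreducible $P$, and $(\varphi,\mu)$ become $(\Phi_q,\mu_q)$. In both cases the engine is Vinogradov's device (M\"obius inversion on the divisor lattice) together with an orthogonality relation for the relevant character group. First I would reduce to the ``squarefree'' case. Being $t$-free depends only on $\mathrm{rad}(t)=\prod_{p\mid t}p$: if $p\mid t$ and $w=\beta^{d}$ with $p\mid d\mid t$, then $w=(\beta^{d/p})^{p}$ already shows $w$ is not $p$-free, and conversely. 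Since $\mu(d)=0$ off squarefree $d$, only squarefree $d$ contribute to $\int_{d\mid t}\eta_d$, so we may take $t=p_1\cdots p_r$ squarefree. Dually, $w$ fails to be $T$-free exactly when $w\in\mathrm{Im}\,L_{P}$ for some irreducible $P\mid T$, and $\mu_q(D)=0$ off products of distinct irreducibles, so we may take $T=P_1\cdots P_r$ with the $P_j$ distinct monic irreducibles.

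Next I would factor $\omega_t$ (resp.\ $\Omega_T$) over the prime (resp.\ irreducible) parts. Because $\widehat{\fqn^{*}}$ is cyclic, every multiplicative character of squarefree order $d=\prod_{j\in S}p_j$ decomposes uniquely as $\prod_{j\in S}\eta_{p_j}$ with $\mathrm{ord}(\eta_{p_j})=p_j$, so $\sum_{(d)}\eta_d(w)=\prod_{j\in S}\sum_{(p_j)}\eta_{p_j}(w)$. Substituting into $\omega_t$, expanding the resulting sum over subsets $S$, and using $\theta(t)=\prod_j (p_j-1)/p_j$, one gets
\[
\omega_t(w)=\prod_{j=1}^{r}\Bigl(\tfrac{p_j-1}{p_j}-\tfrac1{p_j}\sum\nolimits_{(p_j)}\eta_{p_j}(w)\Bigr).
\]
For part (2) the role of ``$\widehat{\fqn^{*}}$ cyclic'' is played by the Chinese Remainder isomorphism $\F_q[x]/\langle T\rangle\cong\prod_j\F_q[x]/\langle P_j\rangle$, equivalently the primary decomposition $\ker L_T=\bigoplus_j\ker L_{P_j}$ of the $T$-torsion of $(\fqn,\circc)$; this splits the additive characters of $\F_q$-order dividing $T$ into coordinates of $\F_q$-order dividing $P_j$, giving the analogous product formula for $\Omega_T(w)$ whose $j$-th factor is $\tfrac{\Phi_q(P_j)}{q^{\deg P_j}}-\tfrac1{q^{\deg P_j}}\sum_{(P_j)}\chi_{\delta_{P_j}}(w)$.

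It then remains to recognise each factor as a $0$--$1$ indicator. Multiplicatively this is classical orthogonality: the characters of order dividing $p$ are exactly those trivial on the index-$p$ subgroup $(\fqn^{*})^{p}$, so $\sum_{\mathrm{ord}(\chi)\mid p}\chi(w)=p$ if $w\in(\fqn^{*})^{p}$ and $0$ otherwise; hence $\tfrac{p-1}{p}-\tfrac1p\sum_{(p)}\eta_p(w)=1-\tfrac1p\sum_{\mathrm{ord}(\chi)\mid p}\chi(w)$ equals $1$ precisely when $w$ is not a $p$-th power. Multiplying over $j$ and recalling that $w$ is $t$-free iff $w\notin(\fqn^{*})^{p_j}$ for every $j$ completes part (1). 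For part (2) the $P_j$-factor equals $1-\tfrac1{q^{\deg P_j}}\sum_{\mathrm{ord}(\chi_\delta)\mid P_j}\chi_\delta(w)$ (using $\Phi_q(P_j)=q^{\deg P_j}-1$), so everything comes down to showing that $\sum_{\mathrm{ord}(\chi_\delta)\mid P}\chi_\delta(w)$ equals $q^{\deg P}$ when $w$ lies in the appropriate $\F_q$-subspace of codimension $\deg P$ and $0$ otherwise, and that this subspace is exactly the set of non-$P$-free elements.

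The hard part is precisely this additive orthogonality and the identification of the subspace it selects. The set $\{\delta:\mathrm{ord}(\chi_\delta)\mid P\}$ equals $\ker L_{P}$, an $\F_q$-subspace of dimension $\deg P$, and $\sum_{\delta\in\ker L_P}\chi(\delta w)$ equals $|\ker L_P|$ if $\chi_w\equiv 1$ on $\ker L_P$ and $0$ otherwise; so one must compute the annihilator of $\ker L_P$ under the nondegenerate pairing $(a,b)\mapsto\Tr_{q^n/q}(ab)$. Since the adjoint of $z\mapsto z^{q^{i}}$ under this pairing is $z\mapsto z^{q^{n-i}}$, one finds $(\ker L_P)^{\perp}=\mathrm{Im}\,L_{P^{*}}$, where $P^{*}(x)=x^{\deg P}P(1/x)$ is the reciprocal of $P$; thus the character sum a priori picks out $\mathrm{Im}\,L_{P^{*}}$ rather than $\mathrm{Im}\,L_{P}$. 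The reconciliation is that $D\mapsto D^{*}$ is a degree-, $\mu_q$- and $\Phi_q$-preserving involution on the monic divisors of $x^n-1$ (it permutes the irreducible factors and preserves $\F_q[x]/\langle D\rangle$ up to isomorphism), so re-indexing the sum $\int_{D\mid T}\chi_{\delta_D}$ by this involution leaves it unchanged; one may therefore replace $T$ by $T^{*}$ throughout — which is harmless since every later use of $\Omega_T$ runs over full sets of divisors on which $*$ acts bijectively — and obtain the asserted $\Omega_T(w)=1$ iff $w$ is $T$-free. Everything else, namely the subset expansion and the telescoping of the factors $\theta$ and $\Theta$, is routine bookkeeping.
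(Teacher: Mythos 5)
Your argument is essentially the standard Lenstra--Schoof/Vinogradov proof that the paper itself does not reproduce (it quotes the result from Section 5.2 of \cite{HMPT}): reduce to squarefree moduli (freeness and the weights $\mu,\mu_q$ only see the radical, and $\theta,\Theta$ are radical-invariant), factor the ``integral'' over primes, resp.\ distinct monic irreducibles, using cyclicity of the multiplicative character group, resp.\ the decomposition $\ker L_T=\bigoplus_j\ker L_{P_j}$, and finish by orthogonality; all of these steps in your write-up are correct. Your flagged subtlety in part (2) is real and is a consequence of this paper's convention that $\chi_\delta$ has $\F_q$-order $m_{\delta,q}$: since the adjoint of the Frobenius under the trace pairing is its inverse, $(\ker L_P)^{\perp}=\mathrm{Im}\,L_{P^*}$, so with that convention $\Theta(T)\int_{D\mid T}\chi_{\delta_D}(w)$ is literally the indicator of $T^*$-freeness (with the intrinsic definition of the $\F_q$-order of an additive character, as in \cite{CH03} and \cite{HMPT}, no twist appears and the statement is exact as written). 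One quibble with your reconciliation: re-indexing by the involution $D\mapsto D^*$ does not leave $\int_{D\mid T}\chi_{\delta_D}$ unchanged for general $T$, because the involution carries divisors of $T$ to divisors of $T^*$; the clean statement is simply that, under the paper's convention, $\Omega_T$ is the characteristic function of $T^*$-free elements. Your conclusion that this is harmless is nevertheless right: the paper only ever applies the additive part with $T=x^n-1$ (and, in $S_1$ and $S_3$, sums over the full set of monic divisors of $x^n-1$, which the involution permutes), and $x^n-1$ is self-reciprocal, so the characteristic function of normal elements used in Proposition~\ref{charfun} and Theorem~\ref{main} is exactly as claimed.
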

In particular, for $t=q^n-1$ and $T=x^n-1$, $\omega_t$ and $\Omega_T$ are the characteristic functions for primitive and normal elements, respectively (see Remark~\ref{remark:freeness}). 

\subsubsection{Character sums estimates}\label{subsubsec:char}
Here we present estimates for certain character sums that are of our interest. We first observe that, if $\chi$ is a (nontrivial) additive character of $\fqn$, $\sum_{c\in \fqn}\chi(c)=0$. Additionally, if $\chi$ and $\eta$ are the trivial additive and multiplicative characters of $\fqn$, respectively, then $\sum_{c\in \fqn}\chi(c)\eta(c)=q^n$. We further require estimates of more complicated character sums. The following results are useful.

\begin{lemma}[see Theorem 5.41 of~\cite{LN}]\label{Gauss1} Let $\eta$ be a multiplicative character of $\F_{q^s}$ of order $r>1$ and $F\in \F_{q^s}[x]$ be a monic polynomial of positive degree such that $F$ is not of the form $g(x)^r$ for some $g\in \F_{q^s}[x]$ with degree at least $1$. Suppose that $e$ is the number of distinct roots of $F$ in its splitting field over $\F_{q^s}$. For every $a\in \F_{q^s}$, 
$$\left|\sum_{c\in \F_{q^s}}\eta(aF(c))\right|\le (e-1)q^{s/2}.$$
\end{lemma}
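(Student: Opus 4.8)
The plan is to deduce this from the Riemann Hypothesis for curves over finite fields (Weil's theorem), which is also the route of \cite{LN}. First dispose of the degenerate case $a=0$: then $\eta(aF(c))=\eta(0)=0$ for all $c$ and the sum vanishes, so assume $a\neq 0$; since $|\eta(a)|=1$ and $\eta(aF(c))=\eta(a)\eta(F(c))$, it suffices to bound $S:=\sum_{c\in\F_{q^s}}\eta(F(c))$. Observe that $r\mid q^s-1$ forces $\gcd(r,p)=1$ where $p=\operatorname{char}\F_q$; writing $F=\prod_{i=1}^{e}(x-\beta_i)^{m_i}$ over a splitting field of $F$ and using that $F$ is monic, the hypothesis ``$F$ is not of the form $g^r$'' says precisely that not every $m_i$ is divisible by $r$.

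The key object is the $L$-function over the rational function field $\F_{q^s}(x)$ of the Dirichlet character $\chi$ attached, via Kummer theory, to the cyclic cover $y^r=F(x)$ of the projective line; on a degree-one place $(x-c)$ with $F(c)\neq 0$ its value is $\chi\bigl((x-c)\bigr)=\eta(F(c))$. Since some $m_i$ is not divisible by $r$, the character $\chi$ is ramified, hence nontrivial and not one of the everywhere-unramified characters, so by the standard theory of $L$-functions over $\F_{q^s}(x)$ one has $L(T,\chi)=\prod_{k=1}^{N}(1-\omega_k T)$, a polynomial of degree $N=\deg\mathfrak f_\chi-2$, where $\mathfrak f_\chi$ is the conductor; and Weil's theorem gives $|\omega_k|=q^{s/2}$ for every $k$. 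Comparing the coefficient of $T$ in $L(T,\chi)=\prod_{P\nmid\mathfrak f_\chi}(1-\chi(P)T^{\deg P})^{-1}$ with the one in $\prod_k(1-\omega_k T)$ gives $-\sum_{k=1}^{N}\omega_k=\sum_{\deg P=1,\ P\nmid\mathfrak f_\chi}\chi(P)$, which is $S$ up to a correction of absolute value at most $1$ contributed by the place at infinity (and equal to $S$ exactly when $r\nmid\deg F$, the only case where that place lies in $\mathfrak f_\chi$).

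It remains to bound the conductor by $N\le e-1$. As $\gcd(r,p)=1$, the character $\chi$ is tamely ramified: it ramifies at $x=\beta_i$ exactly when $r\nmid m_i$ and at $\infty$ exactly when $r\nmid\deg F$, with conductor exponent $1$ at each ramified place, so $\deg\mathfrak f_\chi=\#\{\,i:r\nmid m_i\,\}+[\,r\nmid\deg F\,]\le e+1$. Checking the two subcases --- and using $q^{s/2}\ge 1$ to absorb the infinity correction when $r\mid\deg F$ (where then $\deg\mathfrak f_\chi\le e$, so $N\le e-2$) --- one obtains $|S|\le(e-1)q^{s/2}$, whence $\bigl|\sum_{c\in\F_{q^s}}\eta(aF(c))\bigr|=|\eta(a)|\,|S|\le(e-1)q^{s/2}$.

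I expect the real obstacle to be Weil's theorem itself --- the Riemann Hypothesis for curves over finite fields --- which I would simply quote; alternatively one can bypass algebraic geometry and invoke Stepanov's elementary method (in the form of Bombieri and Schmidt), which proves character-sum bounds of exactly this shape directly, at the cost of a much longer argument. The only other delicate point is the conductor computation, where both hypotheses are genuinely used: ``$F$ has exactly $e$ distinct roots'' bounds the finite branch locus, hence $\deg\mathfrak f_\chi$, by $e$, while ``$F$ is not a perfect $r$-th power'' (with monicity and $\gcd(r,p)=1$) forces $\chi$ to be ramified, so that $L(T,\chi)$ is an honest polynomial of the expected degree. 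Equivalently, in geometric language, $N$ equals the dimension of the $\eta$-isotypic piece of $H^1$ of the smooth model of $y^r=F(x)$ under the $\mu_r$-action $(x,y)\mapsto(x,\zeta y)$, and $S$ is essentially minus the trace of Frobenius on that piece.
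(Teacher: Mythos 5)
The paper itself offers no proof of this lemma: it is imported verbatim as Theorem 5.41 of \cite{LN}, so there is no internal argument to compare yours against. Your route --- the Kummer character $\chi$ attached to the cover $y^r=F(x)$, the completed Hecke $L$-function over $\F_{q^s}(x)$, a tame conductor computation, and Weil's Riemann Hypothesis for curves quoted as a black box --- is the standard geometric proof of this bound and is sound in outline; the source cited in the paper reaches the same estimate by more elementary, Gauss-sum based machinery (Davenport--Hasse style), so quoting Weil's theorem is a legitimate, if heavier, substitute. Your reductions (disposing of $a=0$, factoring out $\eta(a)$, noting $\gcd(r,p)=1$, and the equivalence of ``$F\ne g^r$'' with ``some multiplicity $m_i$ is not divisible by $r$'') are all correct, and your treatment of the place at infinity is consistent with working with the completed $L$-function of the primitive character, which is exactly what makes ``all $|\omega_k|=q^{s/2}$'' legitimate.

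One piece of bookkeeping does need repair. Your claim that $\sum_{\deg P=1,\,P\nmid\mathfrak{f}_\chi}\chi(P)$ equals $S$ up to a single modulus-one correction at infinity is not quite right: every rational root $c$ of $F$ whose multiplicity \emph{is} divisible by $r$ gives a finite degree-one place at which $\chi$ is unramified, so $\chi\bigl((x-c)\bigr)$ is a root of unity, whereas the corresponding term of $S$ is $\eta(F(c))=\eta(0)=0$. There can be up to $e-e'$ such extra unit discrepancies, where $e'$ denotes the number of distinct roots of $F$ with multiplicity not divisible by $r$ (and $e'\ge 1$ by hypothesis). Combined with your coarse bound $\deg\mathfrak{f}_\chi\le e+1$, these extra terms would overshoot the target $(e-1)q^{s/2}$. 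The fix is to compute the conductor exactly instead of bounding it: $\chi$ ramifies at $x=\beta_i$ precisely when $r\nmid m_i$, so the finite part of $\mathfrak{f}_\chi$ has degree $e'$, whence $N=e'+[\,r\nmid\deg F\,]-2$, and then $|S|\le N\,q^{s/2}+(e-e')+[\,r\mid\deg F\,]\le (e-1)q^{s/2}$ in both cases, each unit discrepancy being absorbed against the slack $e-e'$ since $q^{s/2}\ge 1$ (also note $e'\ge 2$ automatically when $r\mid\deg F$, so $N\ge 0$). Equivalently, one can first replace $F$ by its $r$-th-power-free part and track the finitely many points where the two sums differ. With that adjustment your argument is complete, modulo the quoted Riemann Hypothesis.
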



\begin{lemma}[see Theorem~2G of~\cite{schmidt}]\label{Gauss2}
Let $\eta$ be a multiplicative character of $\F_{q^s}$ of order $d\neq 1$ and $\chi$ a non-trivial additive character of $\F_{q^s}$. If $F, G\in\F_{q^s}[x]$ are such that $F$ has exactly $m$ roots and $\deg(G)=n$ with $\gcd(d,\deg(F))=\gcd(n,q)=1$, then
\[
\left| \sum_{c\in\F_{q^s}} \eta(F(c))\chi(G(c)) \right| \leq (m+n-1) q^{s/2} .
\]
\end{lemma}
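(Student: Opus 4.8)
The plan is to deduce this estimate, which is a Weil-type bound for a mixed (Kummer $\times$ Artin--Schreier) character sum, from the Riemann Hypothesis for curves over finite fields, most transparently phrased through $\ell$-adic cohomology; essentially the same content can be extracted by the more classical route of counting rational points on an explicit covering curve, which is the path taken in Theorem~2G of~\cite{schmidt}.

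First I would fix a prime $\ell\nmid q$ and encode $\eta$ and $\chi$ by the associated Kummer sheaf $\mathcal{L}_{\eta}$ on $\mathbb{G}_m$ and Artin--Schreier sheaf $\mathcal{L}_{\chi}$ on $\mathbb{A}^1$. On the affine open curve $V=\mathbb{A}^1_{\F_{q^s}}\setminus\{F=0\}$ the sheaf $\mathcal{F}=[F]^{*}\mathcal{L}_{\eta}\otimes[G]^{*}\mathcal{L}_{\chi}$ is lisse of rank $1$, pure of weight $0$, and satisfies $\mathrm{tr}(\mathrm{Frob}_c\mid\mathcal{F})=\eta(F(c))\chi(G(c))$ for every $c\in V(\F_{q^s})$; since $\eta$ vanishes at $0$, the sum in the statement is exactly $\sum_{c\in V(\F_{q^s})}\mathrm{tr}(\mathrm{Frob}_c\mid\mathcal{F})$, which by the Grothendieck--Lefschetz trace formula equals $\sum_{i}(-1)^{i}\mathrm{Tr}\bigl(\mathrm{Frob}_{q^s}\mid H^{i}_c(V_{\overline{\F}_q},\mathcal{F})\bigr)$. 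Next I would argue that $H^{0}_c=0$ (because $V$ is affine) and $H^{2}_c=0$ (because $\mathcal{F}$ is geometrically nonconstant): indeed $[G]^{*}\mathcal{L}_{\chi}$ is wildly ramified at $\infty$ with Swan conductor $\deg G=n$ --- this is precisely where $\gcd(n,q)=1$ enters, guaranteeing that $G$ cannot be lowered in degree at $\infty$ modulo Artin--Schreier relations --- while $[F]^{*}\mathcal{L}_{\eta}$ is tame everywhere, so $\mathcal{F}$ is wildly ramified at $\infty$ and in particular not geometrically constant. Hence the sum equals $-\mathrm{Tr}\bigl(\mathrm{Frob}_{q^s}\mid H^{1}_c(V_{\overline{\F}_q},\mathcal{F})\bigr)$.

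It then remains to bound $\dim H^{1}_c$, and here I would invoke the Grothendieck--Ogg--Shafarevich (Euler--Poincar\'e) formula on $\mathbb{P}^1$: since $\mathcal{F}$ is tame at every finite point of $\mathbb{P}^1\setminus V$ and the $m$ roots of $F$ are removed, $\chi_c(V,\mathcal{F})=\chi_c(V)-\mathrm{Swan}_{\infty}(\mathcal{F})=(1-m)-n$, where $\chi_c$ denotes the compactly supported Euler characteristic. Combined with the two vanishings this gives $\dim H^{1}_c=m+n-1$, and since $\mathcal{F}$ is pure of weight $0$, Deligne's weight bound (equivalently, Weil's bound for curves) forces every Frobenius eigenvalue on $H^{1}_c$ to have absolute value at most $q^{s/2}$; the claimed inequality is then immediate. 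The only real work is the two vanishing statements together with the conductor count --- this is exactly where the arithmetic hypotheses are used ($\gcd(n,q)=1$ as above, and $\gcd(d,\deg F)=1$ to ensure that the Kummer cover $y^{d}=F(x)$ is geometrically irreducible of the expected degree in the curve-counting version). A reader who prefers to avoid $\ell$-adic machinery may instead express the sum via the number of $\F_{q^s}$-points on the smooth projective model of the fibre product of $y^{d}=F(x)$ and $z^{p}-z=\Tr_{q^s/p}(\lambda G(x))$, compute its genus by Riemann--Hurwitz, and apply Weil's bound --- at the cost of some extra bookkeeping to isolate the single character sum from the point count.
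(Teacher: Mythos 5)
Your argument is sound, but it takes a genuinely different route from the paper, which gives no proof at all: the bound is quoted as Theorem~2G of Schmidt's book, where it is established by the elementary Stepanov--Schmidt method (auxiliary polynomials and point counts for the covers $y^d=F(x)$ and $z^p-z=G(x)$), deliberately avoiding the Riemann Hypothesis machinery. Your $\ell$-adic sketch is the modern streamlined version and its steps check out: the trace formula reduces the sum to $-\mathrm{Tr}\bigl(\mathrm{Frob}_{q^s}\mid H^1_c\bigr)$, the vanishing of $H^0_c$ (lisse sheaf on an affine curve) and of $H^2_c$ (wild ramification at infinity, where $\gcd(n,q)=1$ forces $\mathrm{Swan}_\infty=n$) is correct, Grothendieck--Ogg--Shafarevich gives $\dim H^1_c=(m-1)+n$ exactly, and Weil~II bounds each Frobenius eigenvalue by $q^{s/2}$; you also correctly observe that in this formulation the hypothesis $\gcd(d,\deg F)=1$ is not really needed (it serves only to make the Kummer cover irreducible in the point-counting variant, which is why Schmidt's elementary statement carries it). The trade-off is clear: Schmidt's route is self-contained and elementary, in the spirit of the source the paper cites, while yours explains the constant $m+n-1$ conceptually as a cohomological dimension and generalizes readily, at the cost of invoking Deligne's theorem and standard-but-nontrivial inputs (purity of the Kummer and Artin--Schreier sheaves, the Swan conductor of the tensor product) that you quote rather than verify; either way the lemma itself stands, and your sketch would need those quoted facts filled in to count as a complete proof.
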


\section{On primitive $k$-normal elements}
As follows, we may construct $k$-normal elements from a given normal element.
\begin{lemma}\label{LR1}
Let $\beta\in \F_{q^n}$ be a normal element over $\F_q$ and $f\in \F_q[x]$ be a polynomial of degree $k$ such that $f$ divides $x^n-1$. Then $\alpha=f\circc \beta$ is $k$-normal.
\end{lemma}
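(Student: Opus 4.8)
The plan is to use the characterization of $k$-normal elements via the $\F_q$-order (Proposition~\ref{prop:char-k-normal}): it suffices to show that the $\F_q$-order of $\alpha = f\circc\beta$ is a polynomial of degree exactly $k$, equivalently $m_{\alpha,q}(x)$ has degree $k$. Write $g(x) = (x^n-1)/f(x)$, so that $\deg g = n-k$ and $f\cdot g = x^n-1$. The first step is to produce a polynomial of degree $n-k$ in $\I_\alpha$, namely $g$ itself: using Theorem~\ref{thm:linearized}(b), $g\circc\alpha = g\circc(f\circc\beta) = (gf)\circc\beta = (x^n-1)\circc\beta = \beta^{q^n}-\beta = 0$, so $g\in\I_\alpha$ and therefore $m_{\alpha,q}$ divides $g$, giving $\deg m_{\alpha,q}\le n-k$.

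The second step is the reverse inequality $\deg m_{\alpha,q}\ge n-k$, which is where the normality of $\beta$ enters. Suppose $h = m_{\alpha,q}$; then $h\circc\alpha = 0$, i.e. $h\circc(f\circc\beta) = (hf)\circc\beta = 0$, so $hf\in\I_\beta$. Since $\beta$ is normal, Proposition~\ref{prop:char-k-normal} (or the remark following it) tells us $m_{\beta,q}(x) = x^n-1$, so $x^n-1$ divides $hf$. Comparing degrees, $n = \deg(x^n-1)\le \deg(hf) = \deg h + k$, hence $\deg h\ge n-k$. Combining the two steps gives $\deg m_{\alpha,q} = n-k$, and by Proposition~\ref{prop:char-k-normal}, $\alpha$ is $(n-(n-k))$-normal $= k$-normal, as claimed. (In fact the two steps together show $m_{\alpha,q} = g = (x^n-1)/f$ exactly, since $m_{\alpha,q}$ divides $g$ and both are monic of the same degree.)

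I do not anticipate a real obstacle here; the only thing to be slightly careful about is the bookkeeping with $q$-associates: one must invoke Theorem~\ref{thm:linearized}(b) to turn composition of linearized maps into multiplication of ordinary polynomials, and keep straight that $\circc$ is applied to $\beta$ (not $\alpha$) when deriving $hf\in\I_\beta$. One should also note that $\beta\ne 0$ (a normal element is nonzero, being $(x^n-1)$-free), though this is not strictly needed for the degree argument. The argument is short and clean, so I would present it essentially in the form above.
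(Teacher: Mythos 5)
Your proof is correct and follows essentially the same route as the paper: both establish $m_{\alpha,q}(x)=\frac{x^n-1}{f}$ by first showing $\frac{x^n-1}{f}\circc\alpha=0$ via Theorem~\ref{thm:linearized}(b) and then using $m_{\beta,q}(x)=x^n-1$ to pin down the degree. The only cosmetic difference is that you get the lower bound $\deg m_{\alpha,q}\ge n-k$ by a direct degree comparison from $x^n-1\mid m_{\alpha,q}\cdot f$, whereas the paper phrases the same fact as a short contradiction with a strict divisor.
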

\begin{proof} We prove that $m_{\alpha, q}(x)=\frac{x^n-1}{f}$ and, from Proposition~\ref{prop:char-k-normal}, this implies the desired result. We observe that $$\frac{x^n-1}{f}\circc \alpha=\frac{x^n-1}{f}\circc (f\circc \beta)=(x^n-1)\circc \beta=\beta^{q^n}-\beta=0,$$ hence $m_{\alpha, q}(x)$ divides $\frac{x^n-1}{f}$. If it divides strictly, there exists a monic polynomial $g\in \F_q[x]$ of degree at least one such that $m_{\alpha, q}(x)$ divides $\frac{x^n-1}{fg}$ and so
$$0=\frac{x^n-1}{fg}\circc \alpha=\frac{x^n-1}{fg}\circc (f\circc \beta)=\frac{x^n-1}{g}\circc \beta,$$
a contradiction with $m_{\beta, q}(x)=x^n-1$ and this completes the proof. \end{proof}

From Eq.~\eqref{eq:count}, there exist $k$-normal elements in $\fqn$ if and only if $x^n-1$ has a divisor of degree $n-k$ (or, equivalently, a divisor of degree $k$). In particular, we have a method for constructing $k$-normal elements when they actually exist: if we find a divisor $f\in \F_q[x]$ of $x^n-1$ of degree $k$ and a normal element $\beta\in \F_{q^n}$, the element $\alpha=f\circc \beta=L_f(\beta)\in \F_{q^n}$ is $k$-normal. There are many ways of finding normal elements in finite field extensions,  including constructive and random methods; this is a classical topic in the theory of finite fields and the reader can easily find a wide variety of papers regarding those methods. For instance, see \cite{GG90}.

With the previous observations, we obtain the characteristic function for a special class of $k$-normal elements. If we write $\omega_{q^n-1}=\omega$ and $\Omega_{x^n-1}=\Omega$, from Theorem~\ref{thm:charfree} and Remark~\ref{remark:freeness}, for $w\in \fqn$, $\Omega(w)\cdot \omega(L_f(w))=1$ if and only if $w$ is normal and $L_f(w)=f\circc w$ is primitive. This characteristic function describes a particular class of primitive $k$-normal elements: the number $n_f$ of normal elements $\alpha\in \F_{q^n}$ such that $f\circc \alpha=L_f(\alpha)$ is primitive equals $$\sum_{w\in \fqn}\Omega(w)\cdot \omega(L_f(w)).$$ 

Throughout this section, unless otherwise stated, $f\in \F_q[x]$ is a divisor of $x^n-1$ and has degree $k$. The following result is straightforward.

\begin{prop}\label{charfun}
Let $f\in \F_q[x]$ be a divisor of $x^n-1$ of degree $k$ and let $n_f$ be the number of normal elements $\alpha\in \F_{q^n}$ such that $f\circc \alpha=L_f(\alpha)$ is primitive. The following holds:
\begin{equation}\label{Char}\frac{n_f}{\theta(q^n-1)\Theta(x^n-1)}=\sum\limits_{w\in \F_{q^n}}\displaystyle\int\limits_{d|q^n-1}\displaystyle\int\limits_{D|x^n-1}\eta_d(L_{f}(w))\chi_{\delta_D}(w).\end{equation}
In particular, if $n_f>0$, there exist primitive, $k$-normal elements in $\fqn$.
\end{prop}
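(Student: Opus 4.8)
The plan is to recognize the sum on the right of \eqref{Char} as the additive-multiplicative character expansion of a single counting sum built from the characteristic functions $\omega:=\omega_{q^n-1}$ (of primitive elements) and $\Omega:=\Omega_{x^n-1}$ (of normal elements) furnished by Theorem~\ref{thm:charfree} together with Remark~\ref{remark:freeness}. I may assume $k<n$: for $k=n$ we have $f=x^n-1$, so $L_f(w)=w^{q^n}-w=0$ for all $w\in\fqn$, whence $n_f=0$, while the only $n$-normal element, $0$, is not primitive.

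First I would record that, since $\deg f=k<n$, the map $L_f$ sends no normal element to $0$: if $L_f(w)=f\circc w=0$ then $m_{w,q}(x)\mid f$, so $\deg m_{w,q}\le k<n$ and $w$ is not normal. Hence $\omega(L_f(w))$ is defined for every normal $w$, and since $\Omega(w)=1$ exactly when $w$ is normal, the product $\Omega(w)\,\omega(L_f(w))$ --- read as $0$ wherever $\Omega(w)=0$ --- is the indicator of ``$w$ is normal and $L_f(w)$ is primitive''. Summing over $\fqn$,
\[
n_f=\sum_{w\in\fqn}\Omega(w)\,\omega(L_f(w))=\sum_{\substack{w\in\fqn\\ L_f(w)\neq 0}}\Omega(w)\,\omega(L_f(w)),
\]
the second equality because any $w$ with $L_f(w)=0$ is not normal and so contributes $0$.

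Next I would substitute the two identities of Theorem~\ref{thm:charfree}, namely $\omega(v)=\theta(q^n-1)\int_{d\mid q^n-1}\eta_d(v)$ for $v\in\fqn^*$ and $\Omega(w)=\Theta(x^n-1)\int_{D\mid x^n-1}\chi_{\delta_D}(w)$ for all $w\in\fqn$, and pull the constant $\theta(q^n-1)\Theta(x^n-1)$ out of the sum, obtaining
\[
\frac{n_f}{\theta(q^n-1)\Theta(x^n-1)}=\sum_{\substack{w\in\fqn\\ L_f(w)\neq 0}}\ \int_{d\mid q^n-1}\int_{D\mid x^n-1}\eta_d(L_f(w))\,\chi_{\delta_D}(w).
\]
To finish, I would restore the missing range $L_f(w)=0$ of the outer sum at no cost: for such $w$ one has $\Omega(w)=0$, so $\int_{D\mid x^n-1}\chi_{\delta_D}(w)=0$ by Theorem~\ref{thm:charfree}(2) (which is valid for every $w\in\fqn$), and this annihilates the whole summand irrespective of the inner sum over $d$. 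This is precisely \eqref{Char}. For the last assertion, if $n_f>0$ pick a normal $\alpha\in\fqn$ with $L_f(\alpha)=f\circc\alpha$ primitive; by Lemma~\ref{LR1}, since $f\mid x^n-1$ has degree $k$, the element $f\circc\alpha$ is $k$-normal, hence a primitive $k$-normal element of $\fqn$.

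The argument involves no genuine obstacle --- it is essentially bookkeeping --- and the only delicate point, which is why I carry the condition $L_f(w)\neq 0$ through the computation, is that the multiplicative characteristic function $\omega$ is defined only on $\fqn^*$: one must use both that normal elements avoid the kernel of $L_f$ (true as $k<n$) and that the stray terms with $L_f(w)=0$ are wiped out by the additive factor.
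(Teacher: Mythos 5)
Your proof is correct and follows essentially the same route as the paper: expand the product of the characteristic functions $\Omega(w)\,\omega(L_f(w))$ via Theorem~\ref{thm:charfree}, pull out the constant $\theta(q^n-1)\Theta(x^n-1)$, and invoke Lemma~\ref{LR1} for the final assertion. Your extra bookkeeping around the terms with $L_f(w)=0$ (and the exclusion of $k=n$) is a careful treatment of a point the paper passes over as ``straightforward,'' but it is the same argument.
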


\subsection{A sufficient condition for the existence of primitive $k$-normals}
We make some estimates for the character sums that appear in Eq.~\eqref{Char}. We observe that $\eta_d$ is the trivial multiplicative character if and only if $d=1$. Also, $\chi_{\delta_D}$ is the trivial additive character if and only if $\delta_D=0$, i.e., $D=1$. As usual, we split the sum in Eq.~\eqref{Char} as Gauss sums types, according to the trivial and non-trivial characters. For each divisor $d$ of $q^n-1$ and each divisor $D\in \F_q[x]$ of $x^n-1$, set $G_f(\eta_d, \chi_{\delta_D})=\sum\limits_{w\in \F_{q^n}}\eta_d(L_{f}(w))\chi_{\delta_D}(w)$.

Observe that, from Proposition~\ref{charfun}, $$\frac{n_f}{\theta(q^n-1)\Theta(x^n-1)}=S_0+S_1+S_2+S_3,$$ where $S_0=G_f(\eta_1, \chi_0)$, $S_1=\displaystyle\int\limits_{D|x^n-1\atop D\ne 1}G_f(\eta_1, \chi_{\delta_D})$, $S_2=\displaystyle\int\limits_{d|q^n-1\atop d\ne 1}G_f(\eta_d, \chi_{0})$ and $$S_3=\displaystyle\int\limits_{d|q^n-1\atop d\ne 1}\displaystyle\int\limits_{D|x^n-1\atop D\ne 1}G_f(\eta_d, \chi_{\delta_D}).$$ 

We observe that $S_0=q^n$ and $$G_f(\eta_1, \chi_{\delta_D})=\sum\limits_{w\in \F_{q^n}}\eta_1(L_{f}(w))\chi_{\delta_D}(w)=\sum\limits_{w\in \F_{q^n}}\chi_{\delta_D}(w)=0,$$ for any divisor $D$ of $x^n-1$ with $D\ne 1$ (see Subsection~\ref{subsubsec:char}). In particular, $S_1=0$. We require good estimates on the sums $S_2$ and $S_3$. We observe that, in these sums, each term of the form $G_f(\eta_d, \chi_{\delta_D})$ comes with a weight $\mu(d)\mu_q(D)$ (see the compact notation of integrals in Subsection~\ref{subsub:char-free}), which is zero if $d$ or $D$ is not squarefree. In particular, only summands with both $d$ and $D$ squarefree have influence on the sums $S_2$ and $S_3$. Hence, $S_2$ has $W(q^n-1)-1$ nonzero terms and $S_3$ has $(W(q^n-1)-1)(W(x^n-1)-1)$ nonzero terms (see Definition~\ref{def:functions}). Therefore, for $$M_2=\max\limits_{d|q^n-1\atop d\ne 1}|G_f(\eta_d, \chi_{0})|\quad \text{and}\quad M_3=\max\limits_{d|q^n-1, D|x^n-1\atop d, D\ne 1}|G_f(\eta_d, \chi_{\delta_D})|,$$ 
we have $|S_2|\le M_2\cdot (W(q^n-1)-1)$ and $|S_3|\le M_3\cdot (W(q^n-1)-1)(W(x^n-1)-1)$.  We apply Lemmas~\ref{Gauss1} and~\ref{Gauss2} to obtain bounds for $M_2$ and $M_3$. Write $f(x)=\sum_{i=0}^k a_ix^i$. Since $f$ divides $x^n-1$, $f$ is not divisible by $x$ and so $a_0\ne 0$. One can see that the formal derivative of the $q$-associate $L_f(x)=\sum_{i=0}^{k}a_ix^{q^i}$ of $f$ equals $a_0$. In particular, $L_f$ does not have repeated roots, hence is not of the form $y\cdot G(x)^r$ for any $y\in \F_{q^n}, G\in \F_{q^n}[x]$ and $r>1$.  Also, if $f$ divides $x^n-1$ and has degree $k$, $L_f(x)$ has degree $q^k$ and so $\gcd(\deg(L_f), q^n-1)=1$ and the equation $L_f(x)=0$ has exactly $q^{k}$ distinct solutions over $\F_{q^n}$; these solutions describe a $k$-dimensional $\F_q$-vector subspace of $\F_{q^n}$. Finally, we observe that $G(x)=x$ cannot be written as $h^p-h-y$ for any rational function $h\in \F_{q^n}(x)$ and $y\in \F_{q^n}$. From Lemma~\ref{Gauss1}, we conclude that, for each divisor $d\ne 1$ of $q^n-1$, $$|G_f(\eta_d, \chi_0)|=\left|\sum_{w\in \F_{q^n}}\eta_d(L_f(w))\right|\le (q^k-1)q^{n/2}<q^{n/2+k}.$$ 
From Lemma~\ref{Gauss2}, we conclude that, for each divisor $D\ne 1$ of $x^n-1$ and each divisor $d\ne 1$ of $q^n-1$, 
$$|G_f(\eta_d, \chi_{\delta_D})|=\left|\sum_{w\in \F_{q^n}}\eta_d(L_f(w))\chi_{\delta_D}(w)\right|\le (q^k+1-1)q^{n/2}=q^{n/2+k}.$$
Combining all the previous bounds, we have the following inequalities:
\begin{equation}\label{eq:auxiliar}|S_2|\le q^{n/2+k}(W(q^n-1)-1)\;, \; |S_3|\le q^{n/2+k}(W(q^n-1)-1)(W(x^n-1)-1).\end{equation}

We are ready to state our main result.
\begin{theorem}\label{main}
Let $f\in \F_q[x]$ be a divisor of $x^n-1$ of degree $k$ and let $n_f$ be the number of normal elements $\alpha\in \F_{q^n}$ such that $f\circc \alpha$ is primitive. The following holds:
$$\label{charac}\frac{n_f}{\theta(q^n-1)\Theta(x^n-1)}>q^n-q^{n/2+k}W(q^n-1)W(x^n-1).$$
In particular, if 
\begin{equation}\label{sieveLR}  q^{n/2-k}\ge W(q^n-1)W(x^n-1),\end{equation}
then there exist primitive $k$-normal elements in $\F_{q^n}$.
\end{theorem}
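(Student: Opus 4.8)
The plan is to expand the characteristic-function sum in Proposition~\ref{charfun} and bound the error terms using the Gauss-sum estimates already assembled just before the statement. The key quantity is
$$\frac{n_f}{\theta(q^n-1)\Theta(x^n-1)}=S_0+S_1+S_2+S_3,$$
and the preceding discussion has already established $S_0=q^n$, $S_1=0$, and the bounds in Eq.~\eqref{eq:auxiliar}, namely $|S_2|\le q^{n/2+k}(W(q^n-1)-1)$ and $|S_3|\le q^{n/2+k}(W(q^n-1)-1)(W(x^n-1)-1)$. So the proof is essentially a matter of assembling these pieces.

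First I would write
$$\frac{n_f}{\theta(q^n-1)\Theta(x^n-1)}\ge S_0-|S_2|-|S_3|\ge q^n-q^{n/2+k}\bigl[(W(q^n-1)-1)+(W(q^n-1)-1)(W(x^n-1)-1)\bigr].$$
The bracketed expression factors as $(W(q^n-1)-1)\bigl[1+(W(x^n-1)-1)\bigr]=(W(q^n-1)-1)W(x^n-1)$, which is strictly less than $W(q^n-1)W(x^n-1)$. This gives the displayed strict inequality
$$\frac{n_f}{\theta(q^n-1)\Theta(x^n-1)}>q^n-q^{n/2+k}W(q^n-1)W(x^n-1),$$
proving the first assertion.

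For the second assertion, observe that $\theta(q^n-1)=\varphi(q^n-1)/(q^n-1)>0$ and $\Theta(x^n-1)=\Phi_q(x^n-1)/q^n>0$, so $n_f$ has the same sign as the right-hand side above. If Eq.~\eqref{sieveLR} holds, i.e. $q^{n/2-k}\ge W(q^n-1)W(x^n-1)$, then multiplying by $q^{n/2+k}$ gives $q^n\ge q^{n/2+k}W(q^n-1)W(x^n-1)$, so the right-hand side is $\ge 0$, hence $n_f>0$. By Proposition~\ref{charfun}, a positive value of $n_f$ means there is a normal element $\alpha$ with $f\circc\alpha$ primitive; since $\deg f=k$ and $f\mid x^n-1$, Lemma~\ref{LR1} shows $f\circc\alpha$ is $k$-normal, so $f\circc\alpha$ is a primitive $k$-normal element of $\F_{q^n}$.

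There is no real obstacle here: all the analytic work (the Gauss-sum estimates via Lemmas~\ref{Gauss1} and~\ref{Gauss2}, the verification that $L_f$ has no repeated roots and that $\gcd(\deg L_f,q^n-1)=1$) has already been carried out in the paragraph preceding the theorem. The only minor point to be careful about is the bookkeeping of the ``$-1$'' terms coming from excluding the trivial characters, and the observation that dropping them only strengthens the bound; combining the two error terms into the clean product $W(q^n-1)W(x^n-1)$ is the one slightly non-obvious algebraic simplification.
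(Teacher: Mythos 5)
Your proposal is correct and follows essentially the same route as the paper: the decomposition $S_0+S_1+S_2+S_3$ with $S_0=q^n$, $S_1=0$, the bounds of Ineq.~\eqref{eq:auxiliar}, and the observation that $(W(q^n-1)-1)W(x^n-1)<W(q^n-1)W(x^n-1)$ yield the strict inequality, after which positivity of $\theta$ and $\Theta$ together with Proposition~\ref{charfun} (and Lemma~\ref{LR1}) give the existence statement. No gaps.
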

\begin{proof} We have seen that $\label{charac}\frac{n_f}{\theta(q^n-1)\Theta(x^n-1)}=S_0+S_1+S_2+S_3=q^n+S_2+S_3$. In particular, 
$$\frac{n_f}{\theta(q^n-1)\Theta(x^n-1)}\ge q^n-|S_2|-|S_3|.$$
Applying the estimates given in Ineq.~\eqref{eq:auxiliar}, we obtain $$|S_2|+|S_3|<q^{n/2+k}W(q^n-1)W(x^n-1).$$ In particular, if Ineq.~\eqref{sieveLR} holds, $n_f>0$ and so there exist primitive, $k$-normal elements in $\fqn$.\end{proof}

\subsection{Asymptotic results}
Employing bounds on the functions $W(q^n-1)$ and $W(x^n-1)$, we obtain the following result.
\begin{prop}\label{prop:suf-condition}
Suppose that there exist $k$-normal elements in $\F_{q^n}$, where $n\ge 2$. If
\begin{equation}\label{eq:suf-condition}k\le n\cdot \left(\frac{1}{2}-\frac{0.96}{\log n+\log\log q}-\log_q2 \right),\end{equation}
at least one of these $k$-normal elements is also primitive.
\end{prop}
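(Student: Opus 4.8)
The plan is to deduce Proposition~\ref{prop:suf-condition} from Theorem~\ref{main} by showing that the hypothesis \eqref{eq:suf-condition} forces the sieve inequality \eqref{sieveLR}, namely $q^{n/2-k}\ge W(q^n-1)W(x^n-1)$. First I would dispose of the trivial observation that $k$-normal elements exist by hypothesis, so that $f\in\F_q[x]$ a divisor of $x^n-1$ of degree $k$ can be chosen and Theorem~\ref{main} applies. The whole argument then reduces to bounding the two $W$-factors from above and comparing exponents.

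For the factor $W(q^n-1)$ I would use Lemma~\ref{lem:estimate-divisor2}, which gives $W(q^n-1)<(q^n)^{0.96/\log\log(q^n)}=q^{0.96n/\log\log(q^n)}$; here $\log\log(q^n)=\log(n\log q)=\log n+\log\log q$, so $W(q^n-1)<q^{0.96n/(\log n+\log\log q)}$. This is exactly where the term $\tfrac{0.96}{\log n+\log\log q}$ in \eqref{eq:suf-condition} comes from. For the factor $W(x^n-1)$, note that $x^n-1$ has at most $n$ monic divisors of each shape, but more to the point $W(x^n-1)\le 2^n$ trivially (it divides $2$ raised to the number of distinct irreducible factors of $x^n-1$, which is at most $n$); hence $W(x^n-1)\le 2^n=q^{n\log_q 2}$. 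This accounts for the $\log_q 2$ term in \eqref{eq:suf-condition}.

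Putting these together, $W(q^n-1)W(x^n-1)<q^{n(0.96/(\log n+\log\log q)+\log_q 2)}$, so the sieve inequality \eqref{sieveLR} holds as soon as
\begin{equation*}
\frac{n}{2}-k\ge n\left(\frac{0.96}{\log n+\log\log q}+\log_q 2\right),
\end{equation*}
which rearranges precisely to \eqref{eq:suf-condition}. Then Theorem~\ref{main} yields $n_f>0$, i.e. among the normal elements $\alpha$ the element $f\circ^q\alpha$ is primitive for at least one $\alpha$; by Lemma~\ref{LR1}, $f\circ^q\alpha$ is itself $k$-normal, so it is a primitive $k$-normal element of $\F_{q^n}$. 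One caveat: the estimate $W(x^n-1)\le 2^n$ is crude, and if a sharper bound (e.g. via Lemma~\ref{lem:cohen-hucz}, $W(m)<4.9\,m^{1/4}$, applied with $m=x^n-1$ in the polynomial sense, giving $W(x^n-1)<4.9\,q^{n/4}$) were intended, the constant in \eqref{eq:suf-condition} would differ; so I would double-check which bound the stated constants correspond to. The main obstacle is precisely this bookkeeping of which $W$-estimate is being invoked and making the exponent comparison tight enough to land exactly on the displayed inequality rather than a slightly weaker one; the character-sum machinery itself is already packaged in Theorem~\ref{main} and requires no new work.
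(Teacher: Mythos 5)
Your proposal is correct and follows essentially the same route as the paper: apply Lemma~\ref{lem:estimate-divisor2} to get $W(q^n-1)\le q^{0.96n/(\log n+\log\log q)}$, use the trivial bound $W(x^n-1)\le 2^n=q^{n\log_q 2}$, and check that \eqref{eq:suf-condition} then forces the sieve inequality \eqref{sieveLR} of Theorem~\ref{main}. Your closing caveat resolves itself: the $\log_q 2$ term in \eqref{eq:suf-condition} corresponds exactly to the $2^n$ bound, which is the one the paper uses.
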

\begin{proof} 
We observe that $x^n-1$ has at most $n$ distinct irreducible divisors over $\F_q$, hence $W(x^n-1)\le 2^{n}=q^{n\log_q2}$. Since $n\ge 2$, we have that $q^n\ge 4$ and then, from Lemma~\ref{lem:estimate-divisor2} we obtain
$$W(q^n-1)\le q^{\frac{0.96 n}{\log n+\log\log q}}.$$
Using these bounds, one can see that, if Ineq.~\eqref{eq:suf-condition} holds, then $$q^{n/2-k}\ge W(q^n-1)W(x^n-1),$$ and the result follows from Theorem~\ref{main}.\end{proof}

\begin{remark}\label{remark:sharp}For $h(n, q)=\frac{1}{2}-\frac{0.96}{\log n+\log\log q}-\log_q2$, we have $\lim\limits_{q\to \infty}h(n, q)=1/2$ uniformly on $n$. In particular, given $\varepsilon>0$, for $q$ sufficiently large, we can ensure the existence of primitive $k$-normal elements 
for $k\in [0, (\frac{1}{2}-\varepsilon)n]$, whenever $k$-normal elements actually exist in $\fqn$.  
\end{remark}

In general, the bounds on character sums over $\F_{q^n}$ yield the factor $q^{n/2}$. In particular, this result is somehow sharp based on our character sums estimates. In fact, for $n=4$, $k=n/2=2$ is already a critical value: if $q\equiv 3\pmod 4$, we have that $x^4-1$ factors as $(x+1)(x-1)(x^2+1)$ over $\F_q$. In particular, the $\F_q$-order of an element that is $2$-normal equals $x^2-1$ or $x^2+1$. In other words, any element $\beta\in \F_{q^4}$ that is $2$-normal satisfies $\beta^{q^2}=\pm \beta$. Therefore, $\beta^{2(q^2-1)}=1$ and so the multiplicative order of $\beta$ is at most $2(q^2-1)<q^4-1=|\F_{q^4}^*|$. In conclusion, no $2$-normal element in $\F_{q^4}$ can be primitive. Far from the extreme $n/2$, we have effective results.

\begin{cor}\label{cor:430}
Let $q\ge 301$ be a power of a prime and let $n\ge 16$ be a positive integer such that the pair $(q, n)$ is in Table~\ref{tab:pairs}. For any $k\in [0, n/8]$, if there exist $k$-normal elements in $\F_{q^n}$, at least one of them is also primitive.
\end{cor}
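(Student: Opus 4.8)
The plan is to derive Corollary~\ref{cor:430} as a direct application of Proposition~\ref{prop:suf-condition} (equivalently, Theorem~\ref{main}), checking that the hypotheses $q \ge 301$, $n \ge 16$ and $k \le n/8$ force the inequality \eqref{eq:suf-condition}. Concretely, it suffices to show that under these constraints we have
\[
\frac{n}{8} \le n\left(\frac12 - \frac{0.96}{\log n + \log\log q} - \log_q 2\right),
\]
which rearranges to
\[
\frac{0.96}{\log n + \log\log q} + \log_q 2 \le \frac38.
\]
So the whole statement reduces to a two-variable numerical estimate. First I would bound each of the two terms on the left separately. For the second term, $\log_q 2 = \frac{\log 2}{\log q}$ is decreasing in $q$, so $q \ge 301$ gives $\log_q 2 \le \frac{\log 2}{\log 301} < 0.122$. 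For the first term, $\frac{0.96}{\log n + \log\log q}$ is decreasing in both $n$ and $q$, so the worst case over the allowed region is the smallest pair $n = 16$, $q = 301$, giving $\log 16 + \log\log 301 \approx 2.773 + 1.746 = 4.519$ and hence $\frac{0.96}{4.519} < 0.213$. Adding, $0.213 + 0.122 = 0.335 < 0.375 = \tfrac38$, so \eqref{eq:suf-condition} holds for every pair $(q,n)$ in the stated range, and Proposition~\ref{prop:suf-condition} then yields a primitive $k$-normal element whenever $k$-normal elements exist in $\fqn$.

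Since the corollary is phrased with a reference to ``Table~\ref{tab:pairs},'' the cleanest writeup distinguishes two regimes. For pairs $(q,n)$ with $q$ and $n$ sufficiently large that the clean monotonicity argument above applies (which, by the computation just sketched, is in fact all pairs with $q \ge 301$ and $n \ge 16$), the result is immediate from Proposition~\ref{prop:suf-condition}. If Table~\ref{tab:pairs} is instead being used to \emph{exclude} certain small sporadic pairs for which the asymptotic bound \eqref{eq:suf-condition} is too weak (for instance where $\frac38$ is barely missed, or where $x^n-1$ happens to have few irreducible factors so that the cruder bound $W(x^n-1)\le 2^n$ is wasteful), then for those finitely many pairs one would instead return to Theorem~\ref{main} directly and verify \eqref{sieveLR}, i.e. $q^{n/2-k} \ge W(q^n-1)W(x^n-1)$, using the sharper bound $W(m) < 4.9\, m^{1/4}$ from Lemma~\ref{lem:cohen-hucz} on $W(q^n-1)$ together with the actual factorization type of $x^n-1$ over $\fq$ (which depends only on the multiplicative orders of $q$ modulo the divisors of $n$). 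In either reading, the logical skeleton is: reduce to a numerical inequality, bound the two error terms by monotonicity, and clean up a bounded list of exceptions by hand.

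The main obstacle is not conceptual but bookkeeping: making sure the numerical constants actually close with room to spare, and in particular pinning down exactly which pairs $(q,n)$ the table records and why. The delicate point is the interaction between the two competing terms — $\frac{0.96}{\log n + \log\log q}$ wants $n$ and $q$ large, while $\log_q 2$ wants $q$ large but is insensitive to $n$ — so one must be careful that the region ``$q \ge 301$, $n \ge 16$'' is genuinely the sharp threshold at which their sum drops below $\tfrac38$, rather than an artifact of loose estimation. I would therefore present the monotonicity reduction carefully, evaluate at the corner $(301,16)$ to get the governing constant, and then note that relaxing either hypothesis (allowing $q < 301$ or $n < 16$) makes the left-hand side exceed $\tfrac38$, which is precisely why those pairs must be tabulated and, where possible, handled by the stronger Lemma~\ref{lem:cohen-hucz} estimate in Theorem~\ref{main}.
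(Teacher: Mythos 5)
Your proof is correct, but it is not the paper's argument. The paper proves Corollary~\ref{cor:430} directly from Theorem~\ref{main}: it bounds $W(q^n-1)<4.9\,q^{n/4}$ via Lemma~\ref{lem:cohen-hucz} and $W(x^n-1)\le 2^n$, observes that for $k\le n/8$ the sieve condition \eqref{sieveLR} then follows from $\left(\frac{q}{256}\right)^{n/8}\ge 4.9$, and Table~\ref{tab:pairs} simply records pairs $(q,n)$ where this last inequality holds (that is why larger $q$ buys smaller admissible $n$, e.g.\ $q\ge 567$ with $n\ge 16$ versus $q\ge 301$ with $n\ge 80$). You instead route everything through Proposition~\ref{prop:suf-condition}, i.e.\ through the bound $W(q^n-1)<q^{0.96n/(\log n+\log\log q)}$ of Lemma~\ref{lem:estimate-divisor2}, reducing the claim to $\frac{0.96}{\log n+\log\log q}+\log_q 2\le \frac38$ and checking the corner $(q,n)=(301,16)$ by monotonicity; your numerics ($\approx 0.213+0.122=0.335<0.375$) are right, and since every entry of Table~\ref{tab:pairs} satisfies $q\ge 301$ and $n\ge 16$, this does prove the corollary --- in fact a stronger statement, valid for \emph{all} $q\ge 301$, $n\ge 16$, which shows the table is unnecessary for your route. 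What each approach buys: the paper's $4.9\,m^{1/4}$ bound is the one that forces the $q$-versus-$n$ trade-off encoded in the table, whereas the $0.96/\log\log$ bound is sharper in the regime $q^n$ huge and absorbs the whole table at once. Two small caveats on your write-up: the second ``regime'' you hedge about (sporadic excluded pairs handled via Lemma~\ref{lem:cohen-hucz} and the factorization type of $x^n-1$) is moot, since your first computation already covers every tabulated pair; and your closing remark that relaxing $q\ge 301$ or $n\ge 16$ pushes the left-hand side above $\tfrac38$ is false (e.g.\ $(q,n)=(256,16)$ still gives about $0.339<0.375$), so these thresholds are not sharp for your inequality --- harmless for the proof, but the sentence should be deleted.
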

\begin{proof} From Lemma~\ref{lem:cohen-hucz}, $W(q^n-1)<4.9 \cdot q^{n/4}$ and we have the trivial bound $W(x^n-1)\le 2^n$. In particular, if \begin{equation}\label{eq:n/4}\left(\frac{q}{256}\right)^{n/8}\ge 4.9,\end{equation}
then Ineq.~\eqref{sieveLR} holds true for any $k\in [0, n/8]$. It is direct to verify that Ineq.~\eqref{eq:n/4} holds true for the pairs $(q, n)$ in Table~\ref{tab:pairs}. Therefore, from Theorem~\ref{main}, if the pair $(q, n)$ is in Table~\ref{tab:pairs}, there exist primitive $k$-normal elements in $\F_{q^n}$ whenever $k$-normals exist and $k\in [0, n/8]$.
\end{proof}

\begin{table}[h]
\begin{center}\small
\begin{tabular}{c||c|c|c|c|c|c|c}
$q$ &  $\ge 567$ & $\ge 435$ & $\ge 381$& $\ge 352$ & $\ge 334$ & $\ge 301$ \\ \hline
$n$ & $\ge 16$ & $\ge 24$ & $\ge 32$ & $\ge 40$ & $\ge 48$ & $\ge 80$ 
\end{tabular}
\end{center}
\caption{Values for $q$ and $n$ such that Ineq.~\eqref{eq:n/4} holds.
\label{tab:pairs}}
\end{table}

\section{On the order of $k$-normal elements}
In the previous section, we proved an existence result for primitive $k$-normal elements and we found out that our technique does not work for generic values of $k$ in $[0, n]$: in fact, we can only asymptotically reach the interval $[0, n/2)$. In this section, we discuss a less restrictive question: the existence of $k$-normal elements with high multiplicative order in the group $\F_{q^n}^*$. By high order elements we mean an element $\alpha\in \F_{q^n}^*$ such that $\ord(\alpha)=e_n$, the multiplicative order of $\alpha$ in $\F_{q^n}^*$, grows faster than any polynomial in $n\log q$ when $n$ goes to infinity (see \cite{G99}). High order elements appear in the literature as an approach to primitive elements. They have been useful in many practical situations, including the celebrated AKS primality test~\cite{AKS}. Most of the well known results on high order elements give exponential expressions in $n$ as lower bounds: for instance, in the Artin-Schreier extensions $K:=\F_{p}[x]/(x^p-x-1)$, the coset of $x$ has multiplicative order at least $2^{2.54 p}$ (see \cite{V04}).

The main idea here is the following: we find estimates for the number $a_m$ of elements in $\F_{q^n}^*$ with multiplicative order at most $m$ and for the number $N_k$ of $k$-normal elements in $\fqn$. If $N_k\le a_m$, we can guarantee the existence of $k$-normal elements with multiplicative order at least $m$. We summarize our result as follows.

\begin{theorem}\label{main2}
Suppose that there exist $k$-normal elements in $\fqn$, where $n\ge 2$ and $k\le n-1$. Then there exists a $k$-normal element with multiplicative order at least 
$$\tau(k, q, n) :=4^{(k-n)/q}\cdot q^{n-k-\frac{1.1n}{\log \log (q^n-1)}}.$$
\end{theorem}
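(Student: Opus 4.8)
The plan is to follow the counting strategy announced just before the statement: produce a lower bound on the number $a_m$ of elements of $\fqn^*$ whose multiplicative order is \emph{at most} $m$, produce an upper bound on the number $N_k$ of $k$-normal elements in $\fqn$, and then observe that if $N_k \le a_m$ then, by pigeonhole, some $k$-normal element must have multiplicative order strictly greater than $m$ (since the $a_m$ low-order elements cannot exhaust all $N_k$ of the $k$-normal ones). The task then reduces to choosing $m$ as large as possible subject to $N_k \le a_m$, and checking that this largest admissible $m$ is at least $\tau(k,q,n)$.

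First I would bound $N_k$. By Lemma~\ref{eq:count}, $N_k = \sum_{h \mid x^n-1,\ \deg h = n-k} \Phi_q(h)$, and since $\Phi_q(h) \le q^{\deg h}$ for any monic $h$, each summand is at most $q^{n-k}$. The number of monic divisors of $x^n-1$ of degree $n-k$ is at most the total number of monic divisors of $x^n-1$. Here one naturally invokes Remark~\ref{remark:k-normals-div} (a divisor of degree $n-k$ exists — that is the hypothesis) together with a divisor-count bound on $x^n-1$. To get the $\log\log(q^n-1)$ in the exponent one should think of divisors of $x^n-1$ over $\fqn$: actually the cleanest route is to relate the number of such divisors to $d(q^n-1)$ or to use the fact that $x^n-1 \mid x^{q^n}-x$ so its divisor count is controlled; more directly, counting monic divisors of $x^n-1$ of a fixed degree is at most $d$ of something, and applying Lemma~\ref{lem:estimate-divisor} (with the bound $d(m) < m^{1.1/\log\log m}$) to $m = q^n-1$ gives $N_k < q^{n-k}\cdot (q^n-1)^{1.1/\log\log(q^n-1)} < q^{n-k}\cdot q^{1.1n/\log\log(q^n-1)}$, which is exactly the reciprocal of the main factor in $\tau(k,q,n)$ up to the $4^{(k-n)/q}$ correction. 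I would double-check that the relevant divisor count is genuinely bounded by $d(q^n-1)$ or adapt Lemma~\ref{lem:estimate} instead; this bookkeeping about \emph{which} $m$ feeds the divisor estimate is the first point requiring care.

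Next I would bound $a_m$ from below. The number of elements of $\fqn^*$ of multiplicative order exactly $e$ is $\varphi(e)$ when $e \mid q^n-1$ and $0$ otherwise, so $a_m = \sum_{e \mid q^n-1,\ e \le m} \varphi(e)$. I want a \emph{lower} bound, so the trivial and robust choice is to keep only $e = \lfloor \text{largest divisor} \le m\rfloor$ — but that is delicate; a cleaner move is to note that if $m' \mid q^n-1$ with $m' \le m$, then $a_m \ge \varphi(m') \ge 4^{-m'/q}\, m'/ \text{(something)}$ via Proposition~\ref{prop:estimate-euler}. Indeed Proposition~\ref{prop:estimate-euler}, written as $(q-1)^k \ge 4^{-k/q} q^k$, is tailor-made to lower-bound $\varphi$ of a number all of whose prime factors are small, or more to the point to handle $\varphi$ of $q^n-1$ itself: one has $\varphi(q^n-1) \ge$ a constant times $q^n / \log\log$. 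The exponent $n-k$ and the factor $4^{(k-n)/q} = 4^{-(n-k)/q}$ in $\tau$ strongly suggest the intended argument is: take $m$ to be (essentially) $\varphi$ of a divisor of $q^n-1$ of the appropriate size, so that $a_m$ automatically collects all elements of order dividing that divisor, namely at least that divisor's worth of elements. Concretely I expect: choose the threshold so that $a_m \ge q^{n-k}\cdot q^{1.1n/\log\log(q^n-1)} \ge N_k$, and Proposition~\ref{prop:estimate-euler} supplies the $4^{(k-n)/q}$ slack needed to pass from "there are this many elements of small order" to "their orders are all at least $\tau$".

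Combining the two bounds: we have $N_k < q^{n-k}q^{1.1n/\log\log(q^n-1)}$ and we arrange $a_{\tau(k,q,n)-1} \ge q^{n-k}q^{1.1n/\log\log(q^n-1)}$ (using Proposition~\ref{prop:estimate-euler} for the $4$-power factor and the definition of $\tau$); hence $N_k \le a_{\tau(k,q,n)-1}$ is false, i.e.\ $N_k > a_{\tau-1}$, so not all $k$-normal elements have order $\le \tau-1$, giving a $k$-normal element of order $\ge \tau(k,q,n)$. The hypothesis $k \le n-1$ guarantees $\tau > 1$ so the statement is nonvacuous, and $n \ge 2$ (so $q^n - 1 \ge 3$) lets us invoke the estimate lemmas. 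I expect the main obstacle to be the precise calibration in the last paragraph — matching the exponents so the $1.1n/\log\log(q^n-1)$ terms cancel cleanly and verifying that the $4^{(k-n)/q}$ factor coming out of Proposition~\ref{prop:estimate-euler} is exactly what is lost (not more) when converting a count of low-order elements into an order lower bound; a secondary subtlety is justifying the $N_k$ bound through the correct divisor-counting lemma.
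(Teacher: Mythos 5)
There is a genuine gap: the logical direction of your two estimates is reversed, and the final pigeonhole step is incoherent as written. To conclude that some $k$-normal element has order exceeding $m$, you need $a_m < N_k$ (strictly fewer elements of order at most $m$ than $k$-normal elements), which requires an \emph{upper} bound on $a_m$ and a \emph{lower} bound on $N_k$. Your plan announces exactly the opposite pair (a lower bound on $a_m$, an upper bound on $N_k$), and the strain shows in your last paragraph: you ``arrange'' $a_{\tau-1}\ge N_k$ and then immediately assert that $N_k\le a_{\tau-1}$ ``is false'', i.e.\ you derive an inequality and conclude its negation. (You may have been misled by the sentence preceding the theorem, where the inequality is stated with the sides swapped; the proof itself uses $a_m<N_k$.) Moreover, your proposed upper bound on $N_k$ rests on the claim that the number of monic divisors of $x^n-1$ of degree $n-k$ is controlled by $d(q^n-1)$; this is unjustified --- the divisor count of $x^n-1$ over $\F_q$ can be as large as $2^n$ and bears no direct relation to $d(q^n-1)$ --- though it becomes irrelevant once the directions are fixed, since no upper bound on $N_k$ is needed at all.

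The paper's actual argument runs as follows. The hypothesis that $k$-normal elements exist yields (Remark~\ref{remark:k-normals-div}, Lemma~\ref{eq:count}) a monic divisor $h$ of $x^n-1$ of degree $n-k$, and the single summand $\Phi_q(h)$ already gives the lower bound
\[
N_k\ \ge\ \Phi_q(h)\ =\ \prod_{i=1}^{s}(q^{d_i}-1)\ \ge\ (q-1)^{n-k}\ \ge\ 4^{(k-n)/q}q^{n-k},
\]
where the last step is Proposition~\ref{prop:estimate-euler} --- note that this proposition is used to bound the polynomial Euler function via $(q-1)^{n-k}$, not to estimate the integer $\varphi(q^n-1)$ as your sketch suggests. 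On the other side, since the number of elements of $\F_{q^n}^*$ of order $d\mid q^n-1$ is $\varphi(d)$, Lemma~\ref{lem:estimate} (applied with the modulus $q^n-1\ge 3$) gives the upper bound $a_m=\sum_{d\mid q^n-1,\ d\le m}\varphi(d)< m\cdot q^{\frac{1.1n}{\log\log(q^n-1)}}$. Taking $m=\tau(k,q,n)$ makes this upper bound equal to $4^{(k-n)/q}q^{n-k}\le N_k$, so $a_m<N_k$ and some $k$-normal element lies outside the set of elements of order at most $m$. Your high-level counting idea is the right one, but without the degree-$(n-k)$ divisor giving the lower bound for $N_k$ and without Lemma~\ref{lem:estimate} giving the upper bound for $a_m$, the argument as proposed does not go through.
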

\begin{proof}
We observe that, if $G$ is a finite cyclic group, for any divisor $d$ of $|G|$ there exist $\varphi(d)$ elements in $G$ with order $d$. If $G=\F_{q^n}^*$ and $A_m$ denotes the set of elements in $\F_{q^n}^*$ with multiplicative order at most $m$, since $q^n-1\ge 3$, from Lemma~\ref{lem:estimate}, we have the following inequality:
$$a_m:=|A_m|=\sum_{d|q^n-1\atop{d\le m}}\varphi(d)< m\cdot q^{\frac{1.1n}{\log \log (q^n-1)}}.$$
If $N_k$ denotes the set of $k$-normal elements of $\F_{q^n}$ over $\F_q$, by our assumption, $N_k:=|B_k|\ne 0$. Hence, from Lemma~\ref{eq:count}, there exist some monic polynomial $h\in \F_q[x]$ of degree $n-k$ that divides $x^n-1$ and $N_k\ge \Phi_q(h)$. If $h=\prod_{i=1}^sf_i$ is the factorization of $h$ over $\F_q$ with $d_i=\deg f_i$, then $\sum_{i=1}^s d_i=n-k$ and $$\Phi_q(h)=\prod_{i=1}^s(q^{d_i}-1)\ge \prod_{i=1}^s(q-1)^{d_i}=(q-1)^{n-k}.$$ In addition, from Proposition~\ref{prop:estimate-euler}, $(q-1)^{n-k}\ge 4^{(k-n)/q}q^{n-k}$. Hence $N_k\ge 4^{(k-n)/q}q^{n-k}$. A simple calculation shows that $$m=\tau(k, q, n)=4^{(k-n)/q}\cdot q^{n-k-\frac{1.1n}{\log \log (q^n-1)}},$$ satisfies $a_m<N_k$ (this is the optimal value of $m$ based on our estimations). Therefore, there exists some element in $B_k\cap (A_m)^{c}$ and we conclude the proof.\end{proof}

\begin{remark}
We observe that $\tau(k, q, n)\ge q^{n(1-\varepsilon(q, n))-k}$, where $$\varepsilon(q, n)=\frac{1.1}{\log\log(q^n-1)}+\frac{\log 4}{q\log q}.$$
Also, $\lim\limits_{q\to\infty}\varepsilon(q, n)=0$ uniformly on $n$. In particular, for any $\varepsilon>0$, there exists $M_{\varepsilon}>0$ such that for $q>M_{\varepsilon}$, $\tau(k, q, n)>q^{n(1-\varepsilon)-k}$ for every $n\ge 2$ and $1\le k\le n-1$ .
\end{remark}

As we have seen, most of the well known results on high order elements $\alpha\in \F_{q^n}$ give lower bounds like $\text{ord}$$(\alpha)\ge c^n$, where $c>1$ is a constant that does not depend on $q$ (for more examples, see~\cite{C07} and~\cite{BR16}). In general, these high-order elements are precisely exhibited, while the previous theorem concerns only on existence. On the other hand, under the condition that $k$-normal elements actually exist, Theorem~\ref{main2} shows that there is some of them having multiplicative order fairly larger than $c^n$ when $k$ is not close to $n$. For instance, given $0<\gamma<1/2$, for $q$ sufficiently large, we have $\tau(k, q, n)> q^{\gamma n}$ for all $n$ and $k\in  [1, (1-2\gamma)n]$.

\section{Existence and number of $k$-normals}
So far we discussed the existence of $k$-normal elements with additional properties like being primitive or having large multiplicative order. Our main results are under the natural condition that $k$-normal elements actually exist in $\fqn$. We recall that, from Remark~\ref{remark:k-normals-div}, for $0\le k\le n$, there exist $k$-normal elements in $\fqn$ if and only if $x^n-1$ is divisible by a polynomial $f\in \F_q[x]$ of degree $n-k$: if this occurs, $h=(x^n-1)/f\in \F_q[x]$ divides $x^n-1$ and has degree $k$ and so we also have $(n-k)$-normal elements. If we consider $h=x^n-1$ or $h=x-1$ we see that, for $k=0, 1, n-1$ or $n$, we always have $k$-normal elements in every extension of $\F_q$. These are the only values of $k$ for which we can always ensure the existence of $k$-normals. In fact, if $n$ is a prime number and $q$ is a primitive root modulo $n$, the polynomial $E_n(x)=\frac{x^n-1}{x-1}$ is the $n$-th cyclotomic polynomial and, according to Theorem 2.47 of \cite{LN}, $E_n$ is irreducible. Hence, $x^n-1$ factors as $x^n-1=(x-1)E_n$ and therefore we do not have $k$-normal elements in $\fqn$ for $1<k<n-1$. This suggests why Conjecture~\ref{conj:mullen} only considers $0$-normal and $1$-normal elements.

Motivated by these observations, we introduce the following definition.
\begin{define}[$\F_q$-practical numbers]
A positive integer $n$ is $\F_q$-practical if, for any $1\le k\le n-1$, $x^n-1$ is divisible by a polynomial of degree $k$ over $\F_q$.
\end{define}
We observe that $n$ is $\F_q$-practical if and only if there exist $k$-normals in $\F_{q^n}$ for any $k\in [0, n]$.  This definition arises from the so called $\varphi$-{\it practical numbers}: they are the positive integers $n$ for which $x^n-1\in \mathbb Z[x]$ is divisible by a polynomial of degree $k$ for any $1\le k\le n-1$. These $\varphi$-practical numbers have been extensively studied in many aspects, such as their density over $\mathbb N$ and their asymptotic growth. In particular, if $s(t)$ denotes the number of $\varphi$-practical numbers up to $t$, according to \cite{PTW}, there exists a constant $C>0$ such that $\lim\limits_{t\to\infty}\frac{s(t)\cdot \log t}{t}=C$. This shows that the $\varphi$-practical numbers behave like the primes on integers and, in particular, their density in $\mathbb N$ is zero.

Of course, the factorization of $x^n-1$ over $\mathbb Z$ also holds over any finite field: we take the coefficients modulo $p$ and recall that $\F_p\subseteq \F_q$. This shows that any $\varphi$-practical number is also $\F_q$-practical. In particular, the number of $\F_q$-practicals up to $t$ has growth at least $\frac{Ct}{\log t}$. The exact growth of the number of $\F_q$-practicals is still an open problem. For the case $q=p$, under the Generalized Riemann Hypothesis, it was proved that the number of $\F_p$-practicals up to $t$ is about $O\left(t\sqrt{\frac{\log\log t}{\log t}}\right)$ and so, under this condition, their density is zero (see~\cite{T13}). However, there is no (unconditional) result on the density of $\F_p$-practicals over $\mathbb N$.

Nevertheless, we have that $\F_q$-practical numbers are at least as ``frequent'' as prime numbers. Considering these numbers, our main results can be applied as follows (see Theorems~\ref{main} and~\ref{main2}, Proposition~\ref{prop:suf-condition} and Corollary~\ref{cor:430}).

\begin{theorem}
Let $q$ be a prime power and let $n\ge 2$ be a positive integer that is $\F_q$-practical. For an integer $0\le k\le n$, the following hold:
\begin{enumerate}
\item If $k\le n\cdot \left(\frac{1}{2}-\frac{0.96}{\log n+\log\log q}-\log_q2 \right)$ or $q^{n/2-k}\ge W(q^n-1)W(x^n-1)$, there exist primitive $k$-normal elements in $\fqn$.
\item If the pair $(q, n)$ is in Table~\ref{tab:pairs} and $k\le n/8$, there exist primitive $k$-normal elements in $\fqn$.
\item There exists a $k$-normal element in $\fqn$ with multiplicative order at least $q^{n-k-\frac{1.1n}{\log \log (q^n-1)}}$.
\end{enumerate}
\end{theorem}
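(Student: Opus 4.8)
The plan is to observe that, once the standing hypothesis ``there exist $k$-normal elements in $\fqn$'' is discharged, the theorem is an immediate corollary of the results of Sections~3 and~4; so the only thing that needs proof is that this hypothesis holds automatically when $n$ is $\F_q$-practical.

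First I would record the basic translation. By definition of an $\F_q$-practical number, $x^n-1$ has a monic divisor of degree $k$ over $\F_q$ for every $1\le k\le n-1$, and for $k=0$ and $k=n$ the same is trivially true (take $x^n-1$ and $1$ respectively). By Remark~\ref{remark:k-normals-div} --- equivalently, by the count in Lemma~\ref{eq:count} --- this says exactly that $N_k>0$ for all $0\le k\le n$. Hence $k$-normal elements exist in $\fqn$ throughout the stated range, and, slightly more, for each such $k$ one may fix a divisor $f\in\F_q[x]$ of $x^n-1$ of degree $k$, which is precisely the datum required to invoke Theorem~\ref{main}.

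With this reduction in place I would dispatch the three items in turn. For item~(1): if $k$ satisfies Ineq.~\eqref{eq:suf-condition}, Proposition~\ref{prop:suf-condition} applies, its hypothesis on the existence of $k$-normals being now automatic; if instead $q^{n/2-k}\ge W(q^n-1)W(x^n-1)$, which is Ineq.~\eqref{sieveLR}, then Theorem~\ref{main} applied to the chosen divisor $f$ of degree $k$ gives $n_f>0$, hence a primitive $k$-normal element of $\fqn$. For item~(2): when the pair $(q,n)$ lies in Table~\ref{tab:pairs} and $k\in[0,n/8]$, Corollary~\ref{cor:430} applies verbatim, its existence hypothesis again being automatic. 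For item~(3): since $k\le n-1$ and $k$-normal elements exist, Theorem~\ref{main2} applies directly and produces a $k$-normal element whose multiplicative order is at least $\tau(k,q,n)$, which yields item~(3).

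I do not expect any genuine obstacle: the whole argument rests on the single equivalence ``$n$ is $\F_q$-practical $\iff$ $\fqn$ contains $k$-normal elements for every $0\le k\le n$'', which is the definition unwound through Remark~\ref{remark:k-normals-div}. The one point worth a sentence is the endpoint $k=n$ of item~(3): the only $n$-normal element of $\fqn$ is $0$, so the multiplicative-order assertion there should be read for $0\le k\le n-1$ (when $k=n$ the exponent $n-k-1.1n/\log\log(q^n-1)$ is negative and the bound is vacuous anyway). All remaining verifications are the inequalities already carried out in the preceding sections.
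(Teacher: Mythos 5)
Your proposal matches the paper's own treatment: the paper gives no separate argument for this theorem, presenting it as an immediate application of Theorem~\ref{main}, Proposition~\ref{prop:suf-condition}, Corollary~\ref{cor:430} and Theorem~\ref{main2}, exactly as you do, with $\F_q$-practicality discharging the existence hypothesis via Remark~\ref{remark:k-normals-div}. The only caveat (present in the paper's statement as well, and worth noting alongside your $k=n$ remark) is in item~(3): Theorem~\ref{main2} literally yields the bound $\tau(k,q,n)=4^{(k-n)/q}\cdot q^{n-k-\frac{1.1n}{\log\log(q^n-1)}}$, which falls short of the stated $q^{n-k-\frac{1.1n}{\log\log(q^n-1)}}$ by the factor $4^{(n-k)/q}\ge 1$, so a direct citation gives the slightly weaker bound rather than the one printed.
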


\subsection{A special class of $\F_q$-practical numbers}
We observe that the definition of $\F_q$-practical numbers is strongly related to the factorization of $x^n-1$ over $\F_q$. In general, the factorization of such polynomial is unknown: unlike in $\mathbb Z$, the polynomials $x^n-1$ may have many irreducible factors over $\F_q$. Nevertheless, under a special condition on $n$, the factorization of $x^n-1$ over $\F_q$ can be easily given.

\begin{prop}[see Corollary 1 of~\cite{BGO}]\label{prop:fabio}
Let $q$ be a prime power and let $n$ be positive integer such that every prime divisor of $n$ divides $q-1$. Additionally, suppose that $q\equiv 1\pmod 4$ if $n$ is divisible by $8$. For $m=\frac{n}{\gcd(n, q-1)}$, every irreducible factor of $x^n-1$ over $\F_q$ has degree a divisor of $m$. Additionally, for each divisor $t$ of $m$, the number of irreducible factors of $x^n-1$ having degree $t$ equals $\frac{\varphi(t)}{t}\cdot \gcd (n, q-1)=\frac{\varphi(t)n}{tm}$.
\end{prop}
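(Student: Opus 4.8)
The plan is to reduce the statement to the classical description of how cyclotomic polynomials split over $\F_q$. Since every prime divisor of $n$ divides $q-1$, none of them divides $q$, so $\gcd(n,q)=1$ and $x^n-1=\prod_{d\mid n}\Phi_d(x)$, where $\Phi_d$ denotes the $d$-th cyclotomic polynomial; over $\F_q$, each $\Phi_d$ splits into $\varphi(d)/\ord_d(q)$ irreducible factors, all of degree $\ord_d(q)$, where $\ord_d(q)$ is the multiplicative order of $q$ modulo $d$ (see, e.g., Theorem~2.47 of~\cite{LN}). Hence everything follows once we compute $\ord_d(q)$ for each divisor $d$ of $n$.

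The key step is to prove that, under the stated hypotheses, $\ord_d(q)=d/\gcd(d,q-1)$ for every $d\mid n$. First I would invoke the Chinese Remainder Theorem to write $\ord_d(q)=\mathrm{lcm}\,\ord_{p^b}(q)$ over the prime powers $p^b$ exactly dividing $d$, reducing to the case $d=p^b$. For such a prime $p$ one has $p\mid q-1$; writing $\gamma$ for the $p$-adic valuation of $q-1$, the lifting-the-exponent lemma gives $v_p(q^k-1)=\gamma+v_p(k)$ for all $k\ge1$ --- for $p=2$ this identity requires $4\mid q-1$, which is precisely what the extra hypothesis secures when $8\mid n$, and is checked by hand in the remaining cases $v_2(n)\le2$. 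Consequently $\ord_{p^b}(q)$ is the least $k$ with $\gamma+v_p(k)\ge b$, namely $p^{\max(0,\,b-\gamma)}$, which is exactly the $p$-part of $d/\gcd(d,q-1)$; taking the least common multiple recovers $d/\gcd(d,q-1)$. The same computation shows $v_p(\ord_d(q))\le v_p(m)$ for every prime $p$, so every such degree divides $m$.

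For the count, fix a divisor $t$ of $m$ and put $g=\gcd(n,q-1)$. The irreducible factors of $x^n-1$ of degree $t$ are contributed exactly by the divisors $d\mid n$ with $d/\gcd(d,q-1)=t$, their total number being $\tfrac1t\sum_d\varphi(d)$ over this set. I would evaluate this sum prime by prime: for a prime $p\mid n$ with $v_p(q-1)=\gamma$, the admissible exponents $\delta=v_p(d)$ are $\delta=v_p(t)+\gamma$ if $v_p(t)>0$, and $0\le\delta\le\min(v_p(n),\gamma)=v_p(g)$ if $v_p(t)=0$; in both cases $\sum_{\delta}\varphi(p^\delta)=\varphi(p^{v_p(t)})\cdot p^{v_p(g)}$, using $\sum_{j=0}^a\varphi(p^j)=p^a$ and $\varphi(p^{a+b})=p^b\varphi(p^a)$ for $a\ge1$. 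Multiplying over all primes dividing $n$ and using multiplicativity of $\varphi$ yields $\sum_d\varphi(d)=\varphi(t)\,g$, so the number of degree-$t$ irreducible factors equals $\varphi(t)g/t=\varphi(t)n/(tm)$, since $g=n/m$.

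The main obstacle is the behaviour at $p=2$: the lifting-the-exponent identity fails exactly when $q\equiv3\pmod4$, and indeed $\ord_8(q)=2\ne4=8/\gcd(8,q-1)$ for, say, $q\equiv3\pmod8$, which is why the hypothesis $q\equiv1\pmod4$ is needed once $8\mid n$. When $8\nmid n$ one checks directly that $\ord_2(q)=1$ and $\ord_4(q)\in\{1,2\}$ still agree with $2^{v_2(n)}/\gcd(2^{v_2(n)},q-1)$, so the formula survives; the rest of the argument is routine bookkeeping of $p$-adic valuations.
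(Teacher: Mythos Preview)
Your argument is correct. The paper does not prove this proposition at all: it is quoted verbatim from Corollary~1 of~\cite{BGO} and used as a black box, so there is no ``paper's own proof'' to compare against. What you have written is a self-contained proof via cyclotomic factorisation and lifting-the-exponent, and each step checks out: the identity $\ord_d(q)=d/\gcd(d,q-1)$ follows prime by prime from LTE exactly as you describe, the divisibility $\ord_d(q)\mid m$ is then immediate from $v_p(d)\le v_p(n)$, and your multiplicative bookkeeping for the sum $\sum_d\varphi(d)$ over $\{d\mid n:\ord_d(q)=t\}$ correctly yields $\varphi(t)\gcd(n,q-1)$.

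One small expository point in your final paragraph: where you write that $\ord_2(q)$ and $\ord_4(q)$ ``still agree with $2^{v_2(n)}/\gcd(2^{v_2(n)},q-1)$'', you presumably mean that $\ord_{2^b}(q)=2^b/\gcd(2^b,q-1)$ for each $b\le v_2(n)\le 2$, not just for $b=v_2(n)$; this is what you actually need (and what your listed checks for $b=1,2$ verify), since the formula must hold for every $d\mid n$, not only for $d=n$. With that wording adjusted, the proof is complete.
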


In~\cite{BGO}, not only the distribution degree of the irreducible factors of $x^n-1\in \F_q[x]$ is given but we also have a complete description of the irreducible factors. In the same paper, the authors also extend the previous result, removing the restriction $q\equiv 1\pmod 4$ if $n$ is divisible by $8$. A description on the degree distribution of the irreducible factors of $x^n-1\in \F_q[x]$ can be deduced from an old result~\cite{B55}, but is not as explicit as the ones given in~\cite{BGO}. The following lemma shows the applicability of Proposition~\ref{prop:fabio} to our study on $\F_q$-practical numbers.

\begin{lemma}\label{lem:aux-practical}
Suppose that $r$ is a prime that divides $q-1$. Then for any positive integer $d$ and any $1\le k\le r^d-1$ there exists a polynomial $f\in \F_q[x]$ of degree $k$ that divides $x^{r^d}-1$ and satisfies $f(1)\ne 0$. 
\end{lemma}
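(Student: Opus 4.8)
Since $r$ is a prime dividing $q-1$, the integer $n = r^d$ has the property that its only prime divisor $r$ divides $q-1$; moreover $n$ is a power of the odd prime $r$ unless $r=2$, and even when $r=2$ the hypothesis is about $8 \mid n$, which we can handle separately or simply invoke the extended version of Proposition~\ref{prop:fabio} mentioned in the text. So Proposition~\ref{prop:fabio} applies with $m = \frac{r^d}{\gcd(r^d, q-1)}$. Write $r^e = \gcd(r^d, q-1)$ with $0 \le e \le d$; since $r \mid q-1$ we have $e \ge 1$, so $m = r^{d-e}$ is again a power of $r$ (possibly $m=1$). The plan is to read off from Proposition~\ref{prop:fabio} the exact multiset of degrees of irreducible factors of $x^{r^d}-1$, and then show that from a list with that shape one can hit every target degree $k \in [1, r^d-1]$, while avoiding the factor $x-1$.

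**Extracting the degree list.** By Proposition~\ref{prop:fabio}, every irreducible factor of $x^{r^d}-1$ over $\F_q$ has degree $r^j$ for some $0 \le j \le d-e$, and the number of factors of degree $r^j$ is $\frac{\varphi(r^j)}{r^j}\cdot r^e$. For $j=0$ this count is $r^e$ (these are the linear factors, corresponding to the $r^e$-th roots of unity in $\F_q$; one of them is $x-1$). For $1 \le j \le d-e$ the count is $\frac{\varphi(r^j)}{r^j}\cdot r^e = \frac{r-1}{r}\cdot r^{j}\cdot r^{e-j} \cdot \frac{r^j}{r^j}$ — more cleanly, $\varphi(r^j) = r^{j-1}(r-1)$, so the number of degree-$r^j$ factors is $\frac{r^{j-1}(r-1)}{r^j}\cdot r^e = (r-1)r^{e-1}$, which is independent of $j$. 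Thus the degree multiset is: $r^e$ copies of $1$, and $(r-1)r^{e-1}$ copies of $r^j$ for each $j = 1, \ldots, d-e$. A sanity check via $\sum (\deg) = r^d$: $r^e\cdot 1 + (r-1)r^{e-1}\sum_{j=1}^{d-e} r^j = r^e + (r-1)r^{e-1}\cdot \frac{r^{d-e+1}-r}{r-1} = r^e + r^{e-1}(r^{d-e+1}-r) = r^e + r^d - r^e = r^d$. Good.

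**The combinatorial step.** Now I want: for each $k$ with $1 \le k \le r^d-1$, a sub-multiset of the degree list summing to $k$, not consisting solely of the reserved factor $x-1$ — equivalently, whose product $f$ satisfies $f(1)\neq 0$ is automatic once we either use a factor of degree $\ge 2$ or use a linear factor other than $x-1$; since there are $r^e \ge r \ge 2$ linear factors, as long as we use at least one factor that is not forced to be $x-1$, we are fine, and for $k \ge 1$ we can always arrange this. I would prove the reachability by a greedy / base-$r$ argument: write $k$ in base $r$ (with digits $0 \le c_j \le r-1$), $k = \sum_{j=0}^{d-1} c_j r^j$. To realize digit $c_j$ for $1 \le j \le d-e$, use $c_j$ factors of degree $r^j$ — this is possible since $c_j \le r-1 \le (r-1)r^{e-1}$. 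The digits $c_j$ for $j > d-e$ (if any, i.e.\ when $e \ge 1$ and $d - e < d$) contribute $\sum_{j>d-e} c_j r^j$ to $k$; this is a multiple of $r^{d-e+1}$, so rewrite that part as extra copies of degree-$r^{d-e}$ factors: it equals $\big(\sum_{j>d-e}c_j r^{j-(d-e)}\big)\cdot r^{d-e}$, needing at most $\lfloor r^d / r^{d-e}\rfloor = r^e$ such factors, but we only have $(r-1)r^{e-1}$ of them — so I instead fold those high digits down more carefully, or (cleaner) choose the "top" factor degree to be $1$: absorb everything above the available supply into the $r^e$ linear factors. The cleanest route: first use linear factors to account for $k \bmod r^{\min(d,e+?)}$... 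I would in fact argue directly by induction on $d$, peeling off the largest available factor degree $r^{d-e}$ as many times as its supply $(r-1)r^{e-1}$ and current $k$ allow, then reducing to a smaller instance; the key numeric fact to verify is that the running remainder stays within the reach of the remaining (smaller-degree) factors, which follows because the partial sums of the degree list form a "practical-like" chain: each degree $r^j$ divides the next, and the total supply of degrees $< r^{j+1}$ is at least $r^{j+1}-1$. Finally, to guarantee $f(1)\neq 0$: if $k \ge 1$, the constructed $f$ has positive degree and is a product of irreducible factors of $x^{r^d}-1$; at most one of these is $x-1$, and we can always replace a chosen $x-1$ by a different linear factor (there are $r^e - 1 \ge 1$ others) or, if $k\ge 2$, simply note our construction never needs all $r^e$ linear factors, so we can avoid $x-1$ entirely.

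**Main obstacle.** The content of Lemma~\ref{lem:aux-practical} is essentially the assertion that $r^d$ is $\F_q$-practical \emph{together with} the refinement that $x-1$ can be avoided. The "practical" part is a standard observation once the degree multiset $\{1^{(r^e)}, (r^j)^{((r-1)r^{e-1})}: 1\le j\le d-e\}$ is in hand — the real work is the bookkeeping that the greedy descent stays feasible at every step, i.e.\ verifying the inequality "remaining target $\le$ sum of remaining-factor degrees" throughout; this is where I expect the only genuine friction, and it reduces to the elementary fact that in this list the factors of degree $\le r^j$ already sum to a multiple of $r^j$ that is at least $r^{j+1}$ for each relevant $j$. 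The $f(1)\neq 0$ clause is then a cheap add-on.
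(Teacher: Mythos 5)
Your overall strategy (read off the degree multiset of the irreducible factors of $x^{r^d}-1$ from Proposition~\ref{prop:fabio}, then realize every $k$ by a subset of factors avoiding $x-1$ via base-$r$ bookkeeping) is the same as the paper's treatment of the case $r>2$, but the combinatorial core is not actually carried out, and the specific constructions you do write down fail. Assigning $c_j$ factors of degree $r^j$ to the digit $c_j$ leaves the digits in positions above $d-e$ unrealized, as you notice; your first repair (extra copies of the top degree $r^{d-e}$) exceeds the supply $(r-1)r^{e-1}$, as you also notice; and your ``cleaner'' repair (absorb the excess into the $r^e$ linear factors) is quantitatively impossible in general: when $2\le e\le d-2$ the deficit can be as large as $(r^{e-1}-1)r^{d-e}$ in degree, far beyond the at most $r^e-1$ available from linear factors. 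You then fall back on a greedy induction whose feasibility inequality you state but do not prove (it is true: the factors of degree at most $r^j$ have total degree $r^{e+j}\ge r^{j+1}$, even after discarding $x-1$, so the standard complete-sequence argument would close the gap -- but the proposal ends with ``this is where I expect the only genuine friction''). The paper avoids all of this with one exact assignment you missed: scale each high digit by $r^{s-1}$, i.e.\ realize the digit $a_{i-1}$ in position $i-1\ge s$ by $a_{i-1}r^{s-1}$ distinct irreducible factors of degree $r^{i-s}$; then each degree class serves exactly one digit, the requirement $a_{i-1}r^{s-1}\le (r-1)r^{s-1}$ matches the supply exactly, and the low digits need only $D=\sum_{i<s}a_ir^i\le r^s-1$ linear factors, which can all be chosen different from $x-1$.

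There is a second gap: when $r=2$, $d\ge 3$ and $q\equiv 3\pmod 4$, Proposition~\ref{prop:fabio} as stated does not apply, and the ``extended version'' alluded to in the text yields a \emph{different} degree distribution, so your computed multiset (and hence the whole counting argument) is wrong in that case; saying one could ``handle it separately'' is not a proof. The paper treats $r=2$ separately and elementarily, using the identity $x^{2^d}-1=(x-1)\prod_{i=0}^{d-1}(x^{2^i}+1)$ and the binary expansion of $k$, with $f(1)=2^d\ne 0$ since $q$ is odd; some such argument must be supplied. (Your handling of the subcase $r^d\mid q-1$, where all factors are linear, is fine and matches the paper's case (ii).)
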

\begin{proof} We observe that $r^d$ is not divisible by the characteristic of $\F_q$. Therefore, $x^{r^d}-1$ has no repeated irreducible factors.
We split the proof into cases:
\begin{enumerate}[(i)]
\item $r=2$: in this case, $x^{2^d}-1=(x-1)\prod_{i=0}^{d-1}(x^{2^i}+1)$ (not necessarily the factorization into irreducible factors over $\F_q$). If $1\le k\le 2^d-1$, it follows that $k=\sum_{i=0}^{d-1}s_i2^i$ where $s_i\in\{0, 1\}$. Therefore, $f(x)=\prod_{i=0}^{d-1}(x^{2^i}+1)^{s_i}$ has degree $k$ and divides $x^{2^d}-1$. Also, in this case, $q$ is odd and so $f(1)=2^d\ne 0$.
\item $r>2$ and $r^d$ divides $q-1$: in this case, $x^{r^d}-1=(x-1)\prod_{i=1}^{r^d-1}(x-\zeta_i)$ is the factorization of $x^{r^d}-1$ into irreducible factors over $\F_q$, where $\zeta_i\ne 1$ for any $1\le i\le r^d-1$. If $1\le k\le r^d-1$ then $f(x)=\prod_{i=1}^k(x-\zeta_i)$ has degree $k$, divides $x^{r^d}-1$ and satisfies $f(1)\ne 0$.
\item $r>2$ and $r^d$ does not divide $q-1$: in this case, $8$ does not divide $r^d$ and we employ Proposition~\ref{prop:fabio}; if $r^s$ is the greatest power of $r$ that divides $q-1$, then $1\le s<d$ and the factorization of $x^{r^d}-1$ over $\F_q$ has $r^s$ distinct irreducible factors of degree one and, for each $1\le i\le d-s$, it has $(r-1)r^{s-1}$ distinct irreducible factors of degree $r^i$.  Let $1\le k\le r^d-1$ and $(a_{d-1}\cdots a_{1}a_0)_r$ be the $r$-adic representation of $k$ with $d$ digits. For each $s+1\le i\le d$, take $a_{i-1}r^{s-1}$ distinct irreducible factors of $x^{r^d}-1$ having degree $r^{i-s}$ (this is possible since $0\le a_i\le r-1$ and $1\le i-s\le d-s$) and then take $D=\sum_{i=0}^{s-1}a_ir^i$ distinct linear factors of $x^n-1$, all different from $x-1$ (this is possible since $D\le r^s-1$). The product of the chosen factors yields a polynomial $f\in \F_q[x]$ of degree $\sum_{i=0}^{d-1}a_ir^{i}=k$ such that $f$ divides $x^{r^d}-1$ and satisfies $f(1)\ne 0$.
\end{enumerate}\end{proof}

From the previous lemma, we obtain an infinite class of $\F_q$-practical numbers.
\begin{theorem}\label{main3} Let $q$ be a power of a prime $p$ and let $n\ge 2$ be a positive integer such that every prime divisor of $n$ divides $p(q-1)$. Then $n$ is $\F_q$-practical.\end{theorem}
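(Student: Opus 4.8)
The plan is to reduce the general statement to the prime-power case already handled in Lemma~\ref{lem:aux-practical}, using multiplicativity of the construction across coprime factors of $n$, together with a separate treatment of the $p$-part coming from the characteristic. First I would write $n = p^e \cdot m$ where $p \nmid m$ and every prime divisor of $m$ divides $q-1$. Over $\F_q$ we have $x^n - 1 = (x^m - 1)^{p^e}$, so it suffices to understand divisors of $x^m - 1$ of every degree in $[1, m-1]$ and then combine with powers. For the $m$-part, factor $m = r_1^{d_1} \cdots r_t^{d_t}$ into prime powers; since the $r_i^{d_i}$ are pairwise coprime, $x^m - 1$ is (up to the usual interleaving) "built from" the $x^{r_i^{d_i}} - 1$, and Lemma~\ref{lem:aux-practical} gives, for each $i$ and each $k_i \in [0, r_i^{d_i} - 1]$, a divisor $f_i$ of $x^{r_i^{d_i}} - 1$ of degree $k_i$ with $f_i(1) \neq 0$.

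The key combinatorial step is a mixed-radix / CRT-style digit decomposition: given any target degree $k$ with $1 \le k \le m - 1$, I want to write $k = \sum_i k_i \cdot (m / r_i^{d_i})$ — no, more carefully, I would use that the products of divisors coming from coprime $x^{a}-1$ and $x^{b}-1$ (with $\gcd(a,b)=1$, both coprime to $p$) produce divisors of $x^{ab}-1$ of every degree that is realizable as $\deg f_a + \tfrac{a}{?}$... The clean way is instead to prove by induction on the number of distinct prime divisors of $m$ the following statement: for every $k \in [0, m-1]$ there is a divisor $f$ of $x^m - 1$ of degree $k$ with $f(1) \neq 0$, PLUS the endpoint fact that $x-1$ divides $x^m-1$ (giving degree $m$). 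The inductive step writes $m = m' \cdot r^d$ with $\gcd(m', r^d) = 1$; using $x^m - 1 = \prod_{e \mid r^d} \Phi_{e}(x^{m'})$-type groupings, any divisor $g$ of $x^{m'}-1$ of degree $j$ lifts to a divisor $g(x^{r^d/?})$... The cleanest formulation: if $g \mid x^{m'} - 1$ has degree $j$ and $h \mid x^{r^d} - 1$ has degree $\ell$ with $h(1) \ne 0$, then $\tfrac{g(x^{m'} \text{ part})}{\cdots}$ — I would phrase it as: multiplying a divisor of $x^{m'}-1$ by a "$g(x) \mapsto g(x)$"-independent divisor of $x^{r^d}-1$ after a suitable substitution realizes degree $j \cdot r^d + \ell$ or $\ell \cdot m' + \dots$, and by writing $k$ in the mixed radix with "digits" the $k_i \in [0, r_i^{d_i}-1]$ one sweeps out all of $[0, m-1]$. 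This reuses exactly the digit-expansion idea from cases (i) and (iii) of Lemma~\ref{lem:aux-practical}.

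Finally, I would glue on the $p$-part: from $x^n - 1 = (x^m - 1)^{p^e}$, any divisor $f$ of $x^m-1$ of degree $k_0 \in [0, m-1]$ with $f(1)\ne 0$, raised to powers and multiplied by copies of $(x^m-1)$ or of $(x-1)$, yields divisors of $x^n - 1$ of every degree in $[0, n-1]$ (here $(x-1)^j$ for $j \le p^e$ fills the residues below $m$, and whole factors $x^m - 1$ add multiples of $m$); one checks every $k \in [1, n-1]$ is hit. That establishes that $n$ is $\F_q$-practical. The main obstacle I anticipate is the bookkeeping in the combining step across coprime prime-power parts: making precise which substitution $x \mapsto x^{j}$ sends a divisor of $x^{a}-1$ to a divisor of $x^{ab}-1$ while keeping control of both the degree and the non-vanishing at $1$, and verifying the digit sweep covers every intermediate degree without gaps. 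Everything else is either Lemma~\ref{lem:aux-practical}, the identity $x^n-1=(x^m-1)^{p^e}$ over characteristic $p$, or elementary counting.
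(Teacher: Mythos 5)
Your plan is essentially the paper's proof: induction on the number of distinct prime divisors, writing $k=a\cdot n_1+b$ by the division algorithm, lifting a degree-$a$ divisor $g$ of $x^{r^u}-1$ (from Lemma~\ref{lem:aux-practical}, with $g(1)\ne 0$) via the substitution $x\mapsto x^{n_1}$ to get $g(x^{n_1})h(x)$ with coprimality guaranteed by $g(1)\ne 0$, and handling the characteristic part through $x^n-1=(x^{n_0}-1)^{p^u}$ with powers of $x^{n_0}-1$ supplying the multiples of $n_0$. The combining step you flagged as the main obstacle is resolved exactly this way in the paper (the only organizational difference being that the paper treats the $p$-part as one case inside the same induction rather than peeling it off first), so your proposal is correct in substance.
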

\begin{proof} From definition it suffices to prove that, for any $1\le k\le n-1$, there exists a polynomial $F\in \F_q[x]$ of degree $k$ that divides $x^n-1$. We prove this last statement by induction on the number $N\ge 1$ of distinct prime factors of $n$. If $N=1$ and $n=p^t$ for some $t\ge 1$ the result is trivial, otherwise $n$ is a power of a prime $r$, where $r$ divides $q-1$ and then we use Lemma~\ref{lem:aux-practical}. Suppose that the statement is true for all positive integers with at most $N$ distinct prime factors and let $n$ be a positive integer with $N+1\ge 2$ distinct prime factors. We have two cases to consider.

\begin{enumerate}[(i)]
\item $p$ divides $n$: we can write $n=p^u n_0$, where $\gcd(p, n_0)=1$, $n_0$ has $N$ distinct prime factors and every prime divisor of $n_0$ divides $q-1$. Let $k$ be a positive integer between $1$ and $n-1$. Then $k=an_0+b$, where $0\le a<p^u$ and $0\le b<n_0$. By the induction hypothesis, there exists a polynomial $g(x)\in \F_q[x]$ of degree $b$ dividing $x^{n_0}-1$ (if $b=0$, take $g(x)=1$). Therefore, $F(x)=(x^{n_0}-1)^{a}g(x)$ has degree $k$ and divides $(x^{n_0}-1)^{a+1}$, hence divides $(x^{n_0}-1)^{p^u}=x^n-1$.

\item $p$ does not divide $n$: this case is slightly different from the previous one. We can write $n=r^un_1$ where $r$ is a prime divisor of $q-1$, $\gcd(r, n_1)=1$, $n_1$ has $N$ distinct prime factors, each of them dividing $q-1$. Let $k$ be a positive integer between $1$ and $n-1$ and write $k=an_1+b$, where $0\le a<r^u$ and $0\le b<n_1$. From Lemma~\ref{lem:aux-practical}, there exists a polynomial $g(x)$ of degree $a$ that divides $x^{r^u}-1$ and satisfies $g(1)\ne 0$ (if $a=0$, take $g(x)=1$). By the induction hypothesis, there exists a polynomial $h(x)$ of degree $b$ that divides $x^{n_1}-1$ (if $b=0$, take $h(x)=1$). Then $f(x)=g(x^{n_1})h(x)$ has degree $k$ and both $h(x)$ and $g(x^{n_1})$ divide $x^{n}-1$. To finish the proof, we show that $g(x^{n_1})$ and $h(x)$ are relatively prime: if there is some element $\alpha$ in the algebraic closure of $\F_{q}$ such that $g(\alpha^{n_1})=h(\alpha)=0$, since $h(x)$ divides $x^{n_1}-1$, we have that $\alpha^{n_1}=1$ and then $g(1)=0$, a contradiction with the assumption $g(1)\ne 0$.
\end{enumerate}
\end{proof}

\subsection{The enumerator polynomial}
Fix $\F_q$ a finite field of characteristic $p$ and let $n$ be a positive integer such that $n=p^t\cdot u$ where $t\ge 0$ and $\gcd(p, u)=1$. In particular, $x^u-1$ has distinct irreducible factors over $\F_q$. If $x^u-1$ factors as $x^u-1=f_1\cdots f_s$, where 
$d_i=\deg(f_i)$, we associate to $n$ the following polynomial:
$$P_n(z):=\prod_{i=1}^{s}(1+\Phi_q(f_i)z^{d_i}+\cdots+\Phi_q(f_i^{p^t})z^{p^td_i})=\sum_{i=0}^{n}A_n(i)z^{i}\in \mathbb Z[x],$$
where $\Phi_q(f_i^{s})=(q^{d_i}-1)q^{(s-1)d_i}$ for $s\ge 1$.
Since the function $\Phi_q$ is multiplicative, we have that $$A_n(i)=\sum_{(e_1, \ldots, e_s)\in C_i}\Phi_q(f_1^{e_1}\cdots f_s^{e_s}),$$
where $C_i=\{(e_1, \ldots, e_s)\in \mathbb N^{s}\,|\, e_1d_1+\cdots+e_sd_s=i\}$. In addition, as $(e_1, \ldots, e_s)$ varies in the set $C_i$, the polynomial $f_1^{e_1}\cdots f_s^{e_s}$ runs over the divisors $h\in \F_q[x]$ of $x^n-1$ such that $\deg(h)=i$. 
Therefore, from Lemma~\ref{eq:count}, the number of $k$-normal elements in $\F_{q^n}$ equals $A_n(n-k)$. It is then interesting to study the explicit expansion of $P_n(z)$. For instance, $n$ is $\F_q$-practical if and only if all the coefficients of $P_n(z)$ are nonzero. Of course, the constant term and the leading coefficient of $P_n(z)$ can be easily computed: $A_n(0)=1$ and $A_n(n)=\Phi_q(x^n-1)$, yielding the number of $n$-normal elements and $0$-normal elements, respectively. We exemplify a very particular case when $P_n(z)$ can be explicitly computed.
\begin{example}
For $n=p^t$, $P_{n}(z)=1+(q-1)z+q(q-1)z^2+\cdots+q^{p^t-1}(q-1)z^{n}$. In particular, the number of $k$-normal elements equals $1$ if $k=n$ and $(q-1)q^{n-k-1}$ if $k\le n-1$.
\end{example}

Under the hypothesis of Proposition~\ref{prop:fabio}, the polynomial $P_n(z)$ can be implicitly computed.
\begin{cor}
Let $n$ be positive integer such that every prime divisor of $n$ divides $q-1$. Additionally, suppose that $q\equiv 1\pmod 4$ if $n$ is divisible by $8$. For $m=\frac{n}{\gcd(n, q-1)}$, we have that $$P_n(z)=\prod_{t|m}(1+(q^{t}-1)z^{t})^{\frac{\varphi(t)n}{tm}}.$$ In particular, if $n$ divides $q-1$, then $$P_n(z)=(1+(q-1)z)^n=\sum_{k=0}^{n}\binom{n}{k}(q-1)^{n-k}z^{n-k},$$ and so the number of $k$-normal elements in $\F_{q^n}$ equals $$A_n(n-k)=\binom{n}{k}(q-1)^{n-k}.$$
\end{cor}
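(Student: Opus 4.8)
The plan is to reduce everything to the factorization data supplied by Proposition~\ref{prop:fabio}. First I would observe that the standing hypothesis forces $p\nmid n$: indeed $p\mid q$, so $p\nmid q-1$, and every prime divisor of $n$ divides $q-1$ by assumption. Writing $n=p^t u$ with $\gcd(p,u)=1$ as in the definition of the enumerator polynomial, we therefore have $t=0$ and $u=n$, so $x^n-1$ is squarefree over $\F_q$ and each bracket in the product defining $P_n(z)$ collapses to $1+\Phi_q(f_i)z^{d_i}$, with $\Phi_q(f_i)=q^{d_i}-1$ since $f_i$ is irreducible of degree $d_i$. Hence
$$P_n(z)=\prod_{i=1}^{s}\left(1+(q^{d_i}-1)z^{d_i}\right).$$

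Next I would feed in Proposition~\ref{prop:fabio}, whose extra hypothesis $q\equiv1\pmod 4$ when $8\mid n$ is exactly the one assumed here: every $d_i$ is a divisor of $m=\frac{n}{\gcd(n,q-1)}$, and for each divisor $t$ of $m$ the number of indices $i$ with $d_i=t$ equals $\frac{\varphi(t)n}{tm}$. Grouping the factors of the displayed product according to the common value $t$ of $d_i$ gives
$$P_n(z)=\prod_{t\mid m}\left(1+(q^{t}-1)z^{t}\right)^{\frac{\varphi(t)n}{tm}},$$
which is the claimed identity.

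For the ``in particular'' clause, if $n\mid q-1$ then $\gcd(n,q-1)=n$, so $m=1$; the product above has the single factor $t=1$ with exponent $\frac{\varphi(1)n}{1\cdot 1}=n$, giving $P_n(z)=(1+(q-1)z)^n$. Expanding by the binomial theorem,
$$P_n(z)=\sum_{j=0}^{n}\binom{n}{j}(q-1)^{j}z^{j},$$
and since the number of $k$-normal elements in $\fqn$ is the coefficient $A_n(n-k)$ of $z^{n-k}$ in $P_n(z)$ (as recorded just before the Example, via Lemma~\ref{eq:count}), we read off $A_n(n-k)=\binom{n}{n-k}(q-1)^{n-k}=\binom{n}{k}(q-1)^{n-k}$.

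There is no genuine obstacle here; the only points requiring a moment's care are the verification that $p\nmid n$ (so that the enumerator polynomial simplifies to a product with only linear-in-$z^{d_i}$ brackets) and the bookkeeping of re-indexing the product over the divisors of $m$ via the multiplicity formula of Proposition~\ref{prop:fabio}. Everything else is a direct substitution followed by a binomial expansion.
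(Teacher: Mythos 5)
Your argument is correct and is exactly the intended (unwritten) proof in the paper: since $p\nmid n$ the enumerator polynomial reduces to $\prod_i\bigl(1+(q^{d_i}-1)z^{d_i}\bigr)$, and Proposition~\ref{prop:fabio} supplies the degree multiplicities that regroup this product over the divisors of $m$, with the case $m=1$ and the binomial expansion giving the count of $k$-normal elements. No gaps; the checks you flag ($p\nmid n$ and the multiplicity bookkeeping) are indeed the only points needing care.
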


\section{Conclusions}
In this paper, we have discussed general existence questions on the so called $k$-normal elements over finite fields, mainly motivated by the problems proposed in \cite{HMPT}. We have provided many results on the existence of $k$-normal elements with additional properties like being primitive or having large multiplicative order. In particular, we have obtained sufficient conditions for the existence of primitive $k$-normal elements in $\fqn$ whenever $k$-normal elements exist in $\fqn$; this sufficient condition is encoded in an inequality in $n, q$ and $k$ and we have provided some situations where this inequality holds, including the triples $(q, n, k)$ such that $k$ is at most $n/8$ and the pair $(q, n)$ is in Table~\ref{tab:pairs}. We also have noticed that the number of $k$-normal elements in $\fqn$ is strongly related to the factorization of $x^n-1\in \F_q[x]$ and, in general, this number can be zero. Finally, we have provided sufficient conditions on $q$ and $n$ in order to guarantee the existence of $k$-normal elements in $\fqn$ for any $0\le k\le n$ and, in some particular cases, we have also provided the number of such elements.

We emphasize that the estimates presented in this paper are turning points in the proofs of Proposition~\ref{prop:suf-condition} and Theorem~\ref{main2} but they may not be sharp: we take these estimates just because they give us satisfactory results and make the proofs more clean and simple. The upper bounds given Subsection~\ref{subsec:estimates} are true for all positive integers $m\ge 3$ but in the present text we apply these bounds for $m=q^n-1$, where $q$ is a prime power. A more detailed study on the prime factorization of $m=q^n-1$, combined with some tools in Analytic Number Theory, could yield slightly improvements on our results.



\begin{thebibliography}{99}
\parskip=0.7pt


\bibitem{AKS}  Agrawal, M., Kayal, N. and Saxena,  N., {\it Primes is in $P$}, Ann. of Math., \textbf{160} (2004), 781--793.





\bibitem{B55}
Buttler, M.~C.~R.,
{\it The irreducible factors of $f(x^m)$ over a finite field}, J. London Math. Soc., \textbf{30} (1955), 480--482.


\bibitem{C07} Cheng, Q.,{\it Constructing finite field extensions with large order elements}, SIAM J. Discrete Math. \textbf{21} (2007), 726--730.




\bibitem{CH03} Cohen, S.~D. and Huczynska, S., {\it The primitive normal basis theorem - without a computer},
J. London Math. Soc. \textbf{67} (2003), 41--56.

\bibitem{COT} Cohen, S.~D., Oliveira e Silva, T. and Trudgian, T., {\it On consecutive primitive elements in a finite field}, Bull. London Math. Soc. \textbf{47} (2015), 418-426.


\bibitem{G99} Gao, S., {\it Elements of provable high orders in finite fields}, Proc. Amer. Math. Soc. \textbf{127} (1999), 1615--1623.

\bibitem{GG90} von zur Gathen, J. and Giesbrecht, M.
{\it Constructing normal bases in finite fields}, J. Symb. Comput. \textbf{10} (1990), 547--570.

\bibitem{KR18} Kapetanakis, G. and Reis, L., {\it Variations of the Primitive Normal Basis Theorem}, Des. Codes Cryptogr. (to appear) , ArXiv:1712.09861 [Math.NT] (2017).


\bibitem{HMPT} Huczynska, S.,  Mullen, G.~L., Panario,D. and Thomson, D., {\it Existence and properties of $k$-normal elements over finite fields},
Finite Fields Appl. \textbf{24} (2013), 170--183.


\bibitem{LS87} Lenstra, H.~W. and Schoof, R.,
{\it Primitive normal bases for finite fields},
Math. Comput.  \textbf{48} (1987), 217--231.


\bibitem{LN} Lidl, R. and Niederreiter, H.,
{\it Introduction to finite fields and their applications},
Cambridge University Press, New York, NY, 1986.



\bibitem{BGO}Brochero Mart\'inez, F.~E., Giraldo Vergara, C.~ R. and De Oliveira, L.~B., {\it Explicit factorization of $x^ n-1\in \F_q[x]$},
Des. Codes Cryptogr. \textbf{77} (2015), 277--286.

\bibitem{BR16}Brochero Mart\'inez, F.~E. and Reis, L.,
{\it Elements of high order in {A}rtin-{S}chreier extensions of finite fields $\mathbb{F}_q$}, {Finite Fields Appl.} \textbf{41} (2016), 24--33.


\bibitem{M16}Mullen, G.~L., {\it Some open problems arising from my recent finite fields research}
 {In Contemporary developments in finite fields and applications}, 254-269.
World Scientific, A.~Canteaut, G.~Effinger, S.~Huczynska, D.~Panario, and L.~Storme, editors, 2016.



\bibitem{NR83} Nicolas, J.~L. and Robin, G., {\it Majorations explicites pour le nombre de diviseurs de $N$}, Can. Math. Bull. \textbf{26} (1983), 485--492.



\bibitem{O34} Ore, O.,
{\it Contributions to the theory of finite fields}
,Trans. Amer. Math. Soc. \textbf{36} (1934), 243--274.



\bibitem{PTW} Pomerance, C., Thompson, L. and Weingartner, A. {\it  On integers n for which $x^n-1$ has a divisor of every degree} Acta Arith. \textbf{175} (2016), 225--243.

\bibitem{RT18} Reis, L. and Thomson, D.
{\it Existence of primitive $1$-normal elements in finite fields}, {Finite Fields Appl.} \textbf{51} (2018), 238--269.

\bibitem{schmidt} Schmidt, W.~M., {\it Equations over Finite Fields, An Elementary Approach}
Springer-Verlag, Berlin Heidelberg, 1976.


\bibitem{T13} Thompson, L. {\it On the divisors of $x^n-1$ in $\F_p[x]$}, Int. J. Number Theory \textbf{9} (2013), 421--430.


\bibitem{V04}
Voloch, J.~F., {\it On some subgroups of the multiplicative group of finite rings}, J. Th\'eor. Nombr. Bordx \textbf{16} (2004), 233--239.





\end{thebibliography}
\end{document}